\documentclass[11pt]{article}
\usepackage{latexsym}
\usepackage{theorem}
\usepackage{graphicx}
\usepackage{amsmath}
\usepackage{color}
\usepackage{mathrsfs}
\usepackage{xcolor}
\usepackage{amsfonts}
\usepackage{enumitem}
\usepackage[page]{appendix}
\usepackage{natbib}
\usepackage{pdf14}
\usepackage{soul}
\usepackage{hyperref} 
\usepackage{mathtools}
\DeclarePairedDelimiter{\floor}{\lfloor}{\rfloor}

\linespread{1.6}

\usepackage[margin=1in]{geometry}

\theorembodyfont{\rmfamily}
\newtheorem{theorem}{Theorem}
\newtheorem{lemma}[theorem]{Lemma}
\newtheorem{proposition}[theorem]{Proposition}

\newtheorem{definition}[theorem]{Definition}
\newenvironment{proof}[1][Proof]{\noindent\textbf{#1.} }{\ \rule{0.5em}{0.5em}}

\DeclareMathOperator*{\argmax}{arg\,max}

\theoremstyle{break}
\newtheorem{algorithm}[theorem]{Algorithm}

\newcommand{\al}{\alpha}


\graphicspath{{../fig/}}

\title{Computing Optimal Strategies for a Search Game in Discrete Locations}

\author{Jake Clarkson\thanks{Centre Inria d'Université Côte d'Azur, Sophia Antipolis, 06902, France, jake.clarkson@inria.fr} \and Kyle Y. Lin\thanks{Operations Research Department, Naval Postgraduate School, Monterey, CA 93943, kylin@nps.edu}}

\begin{document}

\maketitle

\begin{abstract}
\noindent
Consider a two-person zero-sum search game between a hider and a searcher.
The hider hides among $n$ discrete locations, and the searcher successively visits individual locations until finding the hider. 
Known to both players, a search at location $i$ takes $t_i$ time units and detects the hider---if hidden there---independently with probability $\al_i$, for $i=1,\ldots,n$.
The hider aims to maximize the expected time until detection, while the searcher aims to minimize it.
We present an algorithm to compute an optimal strategy for each player. We demonstrate the algorithm's efficiency in a numerical study, in which we also study the characteristics of the optimal hiding strategy.
\end{abstract}

\noindent
\textbf{Keywords:} Search games, Gittins index, semi-finite games, search and surveillance.

\section{Introduction} \label{sec:intro}
Consider the following two-person zero-sum game $G$ studied in \cite{clarkson2022classical}. A hider chooses one of $n$ locations (henceforth boxes for conciseness) to hide in, and a searcher searches these boxes one at a time in order to find the hider.
A search in box $i$ takes a known $t_i > 0$ time units and will find the hider with known probability $\al_i \in (0, 1)$ if the hider is there, for $i=1,\ldots, n$. The searcher wants to minimize the expected total time until the hider is found, while the hider wants to maximize it.

The hider has $n$ pure strategies, each corresponding to a box to hide in. 
A mixed hiding strategy sees the hider choose a probability $p_i \in[0,1]$ with which to hide in box $i$, and is written by $\mathbf{p} \equiv (p_1,\ldots,p_n)$. Due to the possibility of overlook, the searcher may need to visit a box many times to find the hider; therefore, the search length can be arbitrarily long, and a pure search strategy must specify an indefinite, ordered sequence of boxes to search.




As first discovered by Blackwell and reported in \cite{Matula}, when the hider's mixed strategy is fixed, an optimal search strategy is simple to calculate.
\cite{Heller19} provides general methods for solving games where a best response of player 2 to any fixed player 1 strategy is easily obtained.
However, the algorithms of \cite{Heller19} require both players to have finite pure strategy sets, which is not the case for the searcher in $G$.

Indeed, the infinite number of pure search strategies means $G$ is semi-finite, and hence difficult to analyze. Building on previous work for special cases, \cite{clarkson2022classical} proves that there exist optimal strategies for both players, and that there exists an optimal search strategy that is a mixed strategy of at most $n$ sequences of boxes of a simple form.
While \cite{clarkson2022classical} offers several insightful properties of both players' optimal strategies, and presents a method to test whether a pair of strategies are optimal, it remains unclear how to compute each player's optimal strategy in general.



Despite being studied since the 1960s, solved cases of the search game are rare. Write $\mathbf{p}_0$ for the hiding strategy that hides in box $i$ with probability proportional to $t_i/\alpha_i$. Intuitively, the hiding strategy $\mathbf{p}_0$ is a reasonable choice because the hider prefers to hide in a box $i$ if its search time $t_i$ is large and its detection probability $\alpha_i$ is small.
When all the boxes are identical, $\mathbf{p}_0$, which hides in any box with probability $1/n$, is optimal for the hider by symmetry.
For the special case when $\alpha_i=0.5$ for all boxes, \cite{Ruckle:1991} presents a deterministic search sequence that is optimal for the searcher. 
For the special case where $n=2$ and $t_1=t_2=1$, \cite{GitRob1} identify some sufficient conditions on $\alpha_1$ and $\alpha_2$ for $\mathbf{p}_0$ to be optimal.
\cite{Ruckle:1991} further specializes to the case where $\alpha_2=1$, providing optimal strategies for both players.





Other than these few special cases, the search game $G$ remains unsolved.
Instead, the literature suggests several methods to estimate optimal strategies. For two boxes with unit search times, \cite{GitRob1} finds numerically that $\mathbf{p}_0$ is always a decent approximation to an optimal hiding strategy and is sometimes optimal. These conclusions are extended to $n$ unit-search-time boxes by \cite{GitRob2}. 
To compute an optimal hiding strategy, \cite{Bram1963} and \cite{GitRob2} both suggest a hill-climbing procedure, since the function to be optimized is concave. As for an optimal search strategy, \cite{Bram1963} and \cite{GitRob2} both propose considering optimal counter strategies to several perturbations of an optimal hiding strategy, the former suggesting solving the finite game formed with them. However, these approaches are rather arbitrary, with no guarantee of obtaining an optimal strategy to any specified accuracy.

In a technical report, \cite{lin2015robust} proposes an idea to iteratively compute lower and upper bounds for the value of the game $G$ by solving a new, larger finite game in each iteration. If the bounds converge, the solution to the finite game provides optimal strategies for $G$, but it is not clear if convergence is guaranteed.

This paper presents an algorithm that numerically computes optimal mixed strategies for both players to an arbitrary degree of accuracy limited only by computing power.
Our algorithm shares a similar spirit to that in \cite{lin2015robust}.
In each iteration, however, we use some new findings in \cite{clarkson2022classical} to construct bounds for the value of the game $G$ that are guaranteed to converge.



The simple case of $G$ with $\al_i=1$ for $i=1,\ldots,n$ is discussed in \cite{Heller19}. Due to the perfect detection, a pure search strategy is a permutation of $\{1,\ldots , n\}$, and therefore the search game is finite and easy to solve. The unique optimal hiding strategy is $\mathbf{p}_0$, but there are many optimal search strategies, detailed by \cite{Lidbetter13}. 

Whilst $G$ adds imperfect detection, a wider literature has extended this simple search game in other ways. \cite{Lidbetter13} and \cite{LidbLin17} add multiple hidden objects; the searcher wants to minimize the expected total search time to find all objects while the hider wants to maximize it. \cite{Kikuta90} adds a travel time between boxes, envisaging equally-spaced boxes lying on a straight line. The payoff to both players is total time both searching \emph{and} traveling until the object is found. \cite{Kikuta91} drops the equally-spaced assumption, and \cite{BastonKikuta:2015} considers boxes connected by a more general structure with direction-dependent travel times.


\cite{Efron:1964} studies the case of $t_i=1$ for $i=1,\ldots , n$ with a mobile hider who can switch boxes between searches. The searcher maximizes the probability of finding the hider before a known deadline, whilst the hider wants to minimize it. \cite{Subelman} considers the same objective, but, as in our search game $G$, with imperfect detection and an immobile hider. \cite{Lin2016} extend to the case where the deadline is not known by the searcher, showing that there exists a search strategy optimal for any deadline.

In \cite{GalCasas:2014}, the hider is \textit{prey} and the searcher a \textit{predator}, with the deadline interpreted as the predator tiring and needing rest. In this model,  the prey is always seen when the predator searches the correct box $j$, but capture only occurs with probability $\al_j$, since the prey will attempt to flee. The game ends when either the deadline expires, or there is a capture or escape. \cite{GalAlpCasas:2015} studies an extension where, until the deadline is reached, the game may continue after an escape, with the prey moving to another hiding location. \cite{GalAlpCasas:2019} extends further by allowing the chance of the fleeing prey being caught whilst relocating.

Rather than on discrete locations, search games played on a network or in subset of $\mathbb{R}^n$ have also been studied extensively in the literature. 
For example, \cite{BeckNew1970} study search on the unbounded real line and \cite{Isaacs1965} on any bounded, continuous set. \cite{Gal1980} considers a general, symmetric network, where the hider can hide at any point on any edge. Recent extensions to \cite{Gal1980} include a searcher with two speeds \citep{AlpernLid}, direction-dependent travel times down edges \citep{Alpern:2010, AlpernLid:2014}, and a searcher who must return the hider to a root node before the search completes \citep{Alpern11}.
For an overview on search games, please see \cite{garnaev2000search}, \cite{AlpernGal}, \cite{Alpern4auth}, and \cite{hohzaki2016search}.

The rest of the paper proceeds as follows.
Section~\ref{sec:prelim} recaps some earlier results and presents some new results that will be used to develop the algorithm.
Section~\ref{sec:algorithm} presents an algorithm that computes an upper bound and a lower bound for the optimal value, and proves that the bounds will converge.
Section~\ref{sec:numerical} presents a numerical study whose findings explain several puzzling phenomena in the search games of \cite{GitRob1} and \cite{Ruckle:1991}.
Section~\ref{sec:conclude} concludes.

\section{Preliminaries}
\label{sec:prelim}
Consider the search game $G$.
Recall a pure hiding strategy is a box in which to hide, so the hider's pure strategy space is $\{1, \ldots, n\}$.
A mixed hiding strategy is a probability vector $\mathbf{p} \equiv (p_1,\ldots, p_n) \in \Delta^n$, where $p_i$ is the probability that the hider hides in box $i$ and
\[
\Delta^n \equiv \left\{(p_1, \ldots, p_n) : p_i \in [0,1] \mbox{ for } i=1,\ldots, n \mbox{ and } \sum_{i=1}^n p_i = 1 \right\}.
\]
Recall a pure search strategy is an infinite, ordered list of boxes to search until the hider is found, which we now call a \emph{search sequence}.
The searcher's pure strategy space is therefore the infinite Cartesian product $\mathcal{C} \equiv \{1,2,\ldots,n\}^\infty$, and hence $G$ is a semi-finite two-person zero-sum game. Since the payoff---namely the total time to detection---is bounded by below by 0, standard results in \cite{Blackwell1954} affirm that $G$ has a value, $v^*$, and an optimal hiding strategy, which we denote by $\mathbf{p}^*$.
The uncountability of $\mathcal{C}$ creates difficulties on the searcher's side, but \cite{clarkson2022classical} proves that the searcher also has an optimal strategy.

We define a mixed search strategy as a non-negative function $\theta$ with domain $\mathcal{C}$ such that the set $\{\xi \in \mathcal{C} : \theta(\xi)>0\}$ is finite, and $\sum_{\xi \in \mathcal{C}} \theta(\xi) = 1$.
Under strategy $\theta$, the searcher plays search sequence $\xi \in \mathcal{C}$ with probability $\theta(\xi)$, and we say $\theta$ is a \emph{mixture} of those $\xi$ with $\theta(\xi)>0$. 
It is well known that any optimal search strategy $\theta^*$ is a mixture of search sequences which are best responses to any optimal hiding strategy $\mathbf{p}^*$.
\cite{Ross:1983} classifies a best response to be precisely a \textit{Gittins search sequence}, defined below.




\begin{definition} \label{def:GIP}
A \emph{Gittins search sequence} against a mixed hiding strategy $\mathbf{p} \equiv (p_1, \ldots, p_n)$ is an infinite, ordered list of boxes that meets the following rule. If $m_i \in \{0,1,2,\ldots\}$ searches have already been made of box $i$ during the search process, for $i = 1, \ldots , n$, then the next search is some box $j$ satisfying
\begin{equation} \label{eq:simpleGI}
j = \argmax_{i \in \{1, \ldots ,  n\}} \frac{p_i(1-\al_i)^{m_i}\al_i}{t_i}.
\end{equation}
The terms in \eqref{eq:simpleGI} we call \emph{Gittins indices}, since, by a comment of Kelly on \cite{Gittins79}, our search game with fixed $\mathbf{p}$ may be posed as a multi-armed bandit problem with an optimal index solution. A Gittins search sequence, therefore, always next searches a box with a maximal Gittins index. We write $\mathcal{C}_{\mathbf{p}}^{\rm B} \subset \mathcal{C}$ for the set of Gittins search sequences against hiding strategy $\mathbf{p}$.
\end{definition}

To summarize, we have the following result.

\begin{proposition} \label{prop:searcheroptcounter}
Any optimal search strategy $\theta^*$ is a mixture of sequences contained in $\mathcal{C}_{\mathbf{p^*}}^{\rm B}$ for every optimal hiding strategy $\mathbf{p}^*$. In words, a search sequence used in an optimal search strategy must be an optimal counter to every optimal hiding strategy.
\end{proposition}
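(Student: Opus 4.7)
The plan is to invoke the standard complementary-slackness/equilibrium property for zero-sum games, combined with the Blackwell/Ross characterization (stated immediately before Definition~\ref{def:GIP}) that the best responses to a fixed hiding strategy $\mathbf{p}$ are exactly the Gittins search sequences in $\mathcal{C}_{\mathbf{p}}^{\rm B}$. Fix an optimal hiding strategy $\mathbf{p}^*$ and any optimal search strategy $\theta^*$; by the definition of mixed search strategy, $\theta^*$ has finite support. Write $T(\xi,\mathbf{p})$ for the expected detection time when the searcher plays $\xi\in\mathcal{C}$ and the hider plays $\mathbf{p}$.

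The two optimality conditions I would use are, first, that $\mathbf{p}^*$ guarantees the hider at least the value, so $T(\xi,\mathbf{p}^*)\ge v^*$ for every $\xi\in\mathcal{C}$, and second, that the pair $(\theta^*,\mathbf{p}^*)$ achieves the value, so
\[
v^* \;=\; \sum_{\xi\in\mathcal{C}} \theta^*(\xi)\,T(\xi,\mathbf{p}^*).
\]
Combining these, every $\xi$ in the (finite) support of $\theta^*$ must satisfy $T(\xi,\mathbf{p}^*)=v^*$: if some supported $\xi$ gave a strictly larger value, the inequalities $T(\xi',\mathbf{p}^*)\ge v^*$ on the remaining supported sequences could not compensate, contradicting the displayed equality. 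But $T(\xi,\mathbf{p}^*)=v^*$ means precisely that $\xi$ is a best response to $\mathbf{p}^*$, which by Ross's characterization is equivalent to $\xi\in\mathcal{C}_{\mathbf{p}^*}^{\rm B}$. Since the argument is valid for any optimal $\mathbf{p}^*$, the support of $\theta^*$ lies in $\bigcap_{\mathbf{p}^*} \mathcal{C}_{\mathbf{p}^*}^{\rm B}$, which is the claim.

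I expect no substantive obstacle, only a minor subtlety to acknowledge: the searcher's pure strategy space $\mathcal{C}$ is uncountable, so one might worry about the usual complementary-slackness argument. However, all prerequisites have already been supplied---existence of $v^*$ via \cite{Blackwell1954}, existence of $\mathbf{p}^*$ and $\theta^*$ via \cite{clarkson2022classical}, and the finite-support convention on $\theta^*$ built into its definition---so the complementary-slackness step reduces to a finite sum and no measure-theoretic machinery is needed. The proof is then essentially two lines given the cited best-response characterization.
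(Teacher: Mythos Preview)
Your proposal is correct and follows exactly the route the paper takes: the paper does not give a formal proof but simply cites the ``well known'' complementary-slackness fact that any optimal $\theta^*$ mixes only best responses to any optimal $\mathbf{p}^*$, together with Ross's characterization of best responses as Gittins search sequences, and then states the proposition as a summary. Your write-up merely spells out the complementary-slackness step (using the finite-support convention to reduce it to a finite sum), which is precisely the content the paper leaves implicit.
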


If $|\mathcal{C}_{\mathbf{p}^*}^{\rm B}|=1$, then the sole search sequence in $\mathcal{C}_{\mathbf{p}^*}^{\rm B}$ is optimal. 
Yet, each time there is a tie for the maximal Gittins index in \eqref{eq:simpleGI}, the searcher is free to choose the order to search the tied boxes, so distinct sequences in $\mathcal{C}_{\mathbf{p}^*}^{\rm B}$ are spawned. Therefore, it is possible for $\mathcal{C}_{\mathbf{p}^*}^{\rm B}$ to be any size, even infinite, making it difficult to find an optimal mixture.

Write $\widehat{\mathcal{C}}_{\mathbf{p}}^{\rm B} \subseteq \mathcal{C}_{\mathbf{p}}^{\rm B}$ for those Gittins search sequences against $\mathbf{p}$ which always search tied boxes in the same order.
For example, if $n=4$, the Gittins search sequence in $\widehat{\mathcal{C}}_{\mathbf{p}}^{\rm B}$ corresponding to the ordering $(3,4,1,2)$ will break any tie between any subset of boxes using the order of preference 3, 4, 1, 2. 
Since there are $n!$ permutations of the boxes $\{1,\ldots , n\}$, we have $|\widehat{\mathcal{C}}_{\mathbf{p}}^{\rm B}|\leq n!$. A key result from \cite{clarkson2022classical} is the following.
\begin{theorem} \label{thm:OSSperm}
In the search game $G$, for any optimal hiding strategy $\mathbf{p}^*$, there exists an optimal search strategy which is a mixture of at most $n$ elements of $\widehat{\mathcal{C}}_{\mathbf{p}^*}^{\rm B}$.
\end{theorem}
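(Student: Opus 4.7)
The plan is to reduce the statement to a finite two-person zero-sum subgame and then invoke a standard dimensionality argument. Let $\widehat{G}(\mathbf{p}^*)$ denote the finite zero-sum game whose row player (searcher) chooses from $\widehat{\mathcal{C}}_{\mathbf{p}^*}^{\rm B}$---a set of cardinality at most $n!$---and whose column player (hider) chooses a box in $\{1,\ldots,n\}$, with the same expected-time-to-detection payoff as in $G$. The finite minimax theorem supplies $\widehat{G}(\mathbf{p}^*)$ with a value $\hat v$ and optimal mixed strategies. My aim is to prove $\hat v = v^*$, after which any optimal searcher strategy in $\widehat{G}(\mathbf{p}^*)$ is automatically optimal in $G$, and the bound of $n$ on the support size follows from the standard fact that, in a finite two-person zero-sum game, one player always has an optimal mixed strategy supported on no more pure strategies than the opponent has (here the hider has only $n$).

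The inequality $\hat v \geq v^*$ is immediate. By Definition~\ref{def:GIP}, every $\xi \in \widehat{\mathcal{C}}_{\mathbf{p}^*}^{\rm B} \subseteq \mathcal{C}_{\mathbf{p}^*}^{\rm B}$ is a Gittins search sequence against $\mathbf{p}^*$ and hence, by Proposition~\ref{prop:searcheroptcounter}, a best response to $\mathbf{p}^*$ achieving the expected payoff $v^*$. Therefore $\mathbf{p}^*$, viewed as a mixed hider strategy in $\widehat{G}(\mathbf{p}^*)$, guarantees the hider an expected payoff of $v^*$ against every pure searcher strategy of the finite game.

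The reverse inequality $\hat v \leq v^*$ is where the real work lies. I would invoke the existence of an optimal search strategy $\theta^*$ of $G$ (from \cite{clarkson2022classical}) and symmetrize it into a mixture $\hat\theta$ supported on $\widehat{\mathcal{C}}_{\mathbf{p}^*}^{\rm B}$ whose expected detection time against every pure hider is no larger than under $\theta^*$, and hence at most $v^*$. The key structural observation is that every sequence in $\mathcal{C}_{\mathbf{p}^*}^{\rm B}$ shares the same ``block structure'' of tied Gittins indices induced by $\mathbf{p}^*$; sequences differ only in the order chosen within each tied block, and a permutation-based sequence in $\widehat{\mathcal{C}}_{\mathbf{p}^*}^{\rm B}$ is precisely one whose block orderings all respect a single fixed global permutation of $\{1,\ldots,n\}$. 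The symmetrization rewrites each $\xi$ in the support of $\theta^*$ as a convex combination of such globally consistent sequences, exploiting the fact that reordering inside a single tied block leaves the expected detection time against $\mathbf{p}^*$ invariant since all boxes in the block have identical Gittins indices.

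The main obstacle is precisely this symmetrization. Invariance of the payoff against $\mathbf{p}^*$ inside a single block follows immediately from the Gittins-index equality, but the time at which a given box is first reached against an individual pure hider strategy depends jointly on the orderings chosen in all earlier blocks, so the across-block interaction must be handled by a careful exchange-type argument that controls every coordinate of the payoff vector simultaneously. Once this coordinatewise control is established, the finite-game reduction of the first paragraph together with the LP/extreme-point bound on support size delivers the optimal search strategy supported on at most $n$ elements of $\widehat{\mathcal{C}}_{\mathbf{p}^*}^{\rm B}$ claimed by the theorem.
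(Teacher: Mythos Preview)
The paper does not contain a proof of Theorem~\ref{thm:OSSperm}: it is quoted verbatim as ``a key result from \cite{clarkson2022classical}'' and then used as a black box to derive Proposition~\ref{prop:subgamesoln} and to justify Algorithm~\ref{al:minExpCost}. Consequently there is no proof in the present paper to compare your proposal against.

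On the substance of your proposal itself: the reduction to the finite game $\widehat{G}(\mathbf{p}^*)$ and the bound-of-$n$ argument via Carath\'eodory/LP extreme points are both standard and correct, and your derivation of $\hat v\ge v^*$ is fine. The entire content of the theorem, however, sits in the inequality $\hat v\le v^*$, and here your proposal is only a sketch. You correctly identify that the obstacle is the ``symmetrization'' of an optimal $\theta^*$ (supported on $\mathcal{C}_{\mathbf{p}^*}^{\rm B}$) into a mixture over $\widehat{\mathcal{C}}_{\mathbf{p}^*}^{\rm B}$ that dominates coordinatewise, and you correctly note that invariance of the payoff against $\mathbf{p}^*$ under within-block reordering is not enough, since optimality requires $u(i,\hat\theta)\le v^*$ for every box $i$ individually. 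But the proposal stops at ``once this coordinatewise control is established''---that control is precisely the theorem, and you have not supplied the exchange argument that would establish it. A sequence $\xi\in\mathcal{C}_{\mathbf{p}^*}^{\rm B}$ may break ties inconsistently across infinitely many tie-blocks, so it is not obvious (and need not be true pointwise) that its payoff vector $(u(1,\xi),\ldots,u(n,\xi))$ lies in the convex hull of the at most $n!$ payoff vectors coming from $\widehat{\mathcal{C}}_{\mathbf{p}^*}^{\rm B}$; what must be shown is only that the particular vector $(v^*,\ldots,v^*)$ lies in that hull, and the argument in \cite{clarkson2022classical} does this through a structural analysis of tie-breaking that your sketch gestures at but does not carry out.
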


Theorem \ref{thm:OSSperm} shows the searcher can focus their efforts on $\widehat{\mathcal{C}}_{\mathbf{p}^*}^{\rm B}$ which has size at most $n!$.
Therefore, if we can somehow guess $\mathbf{p}^*$ correctly, then we can obtain an optimal search strategy by solving a finite matrix game whose size is no larger than $n \times n!$. To formalize this idea, write $G_\mathcal{D}$ for the finite matrix game where the hider as usual can choose any of the $n$ boxes to hide in, but the searcher can choose a search sequence from only a finite subset $\mathcal{D}$ of $\mathcal{C}$.  
\cite{clarkson2022classical} uses Theorem~\ref{thm:OSSperm} to induce the following optimality test for the hider which requires only the solution to a finite matrix game.

\begin{proposition} \label{prop:subgamesoln}
Consider $\mathbf{p} \equiv (p_1,\ldots , p_n)$ with $p_i>0$ for $i=1,\ldots, n$, and the finite matrix game $G_\mathcal{D}$ with $\mathcal{D} = \widehat{\mathcal{C}}_{\mathbf{p}}^{\rm B}$.
Write $\theta$ for an optimal search strategy in $G_\mathcal{D}$.
The following three statements are equivalent.
\begin{enumerate}[label=(\roman*)]
\item $\mathbf{p}$ is optimal in $G_\mathcal{D}$; \label{enu:i}
\item $\theta$ is optimal in $G$; \label{enu:ii}
\item $\mathbf{p}$ is optimal in $G$. \label{enu:iii}
\end{enumerate}
\end{proposition}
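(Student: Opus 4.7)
The approach is a cyclic chain $(\mathrm{iii})\Rightarrow(\mathrm{i})\Rightarrow(\mathrm{ii})\Rightarrow(\mathrm{iii})$. Throughout, let $v^{\ast}$ and $v_{\mathcal{D}}$ denote the values of $G$ and $G_{\mathcal{D}}$, and write $V(\mathbf{q})$ for the expected detection time when the hider uses $\mathbf{q}$ and the searcher best-responds in $G$. Two observations drive everything: (a) since $\mathcal{D}\subseteq\mathcal{C}$, restricting the searcher cannot hurt the hider, so $v_{\mathcal{D}}\ge v^{\ast}$; (b) every $\xi\in\mathcal{D}$ is a Gittins search against $\mathbf{p}$, hence an optimal counter to $\mathbf{p}$ in $G$, so all $\xi\in\mathcal{D}$ give the same expected detection time $V(\mathbf{p})$ against $\mathbf{p}$, and consequently $E[T\mid\mathbf{p},\theta']=V(\mathbf{p})$ for every $\theta'\in\Delta(\mathcal{D})$, where $T$ denotes the detection time.

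For $(\mathrm{iii})\Rightarrow(\mathrm{i})$, I apply Theorem~\ref{thm:OSSperm} at the optimal $\mathbf{p}$ to produce an optimal search strategy $\theta^{\ast}$ in $G$ supported on $\mathcal{D}$. Since $\theta^{\ast}$ is available in $G_{\mathcal{D}}$ and caps every hider's payoff at $v^{\ast}$, we get $v_{\mathcal{D}}\le v^{\ast}$, which with (a) yields $v_{\mathcal{D}}=v^{\ast}$. By (b) and (iii), the hider using $\mathbf{p}$ in $G_{\mathcal{D}}$ achieves $V(\mathbf{p})=v^{\ast}=v_{\mathcal{D}}$, proving optimality in $G_{\mathcal{D}}$. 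For $(\mathrm{i})\Rightarrow(\mathrm{ii})$, (b) and (i) give $V(\mathbf{p})=v_{\mathcal{D}}$; sandwiching with $V(\mathbf{p})\le v^{\ast}\le v_{\mathcal{D}}$ collapses to $v_{\mathcal{D}}=v^{\ast}$, and the optimal $\theta$ in $G_{\mathcal{D}}$, which caps hider payoffs at $v_{\mathcal{D}}=v^{\ast}$, is therefore also optimal in $G$.

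The delicate direction is $(\mathrm{ii})\Rightarrow(\mathrm{iii})$. From (ii) and the fact that $\theta$ is by hypothesis optimal in $G_{\mathcal{D}}$, we immediately obtain $v_{\mathcal{D}}=\max_{\mathbf{q}}E[T\mid\mathbf{q},\theta]=v^{\ast}$. The target $V(\mathbf{p})=v^{\ast}$ would follow if $E[T\mid i,\theta]=v^{\ast}$ for every pure hider strategy $i$, since then the hypothesis $p_{i}>0$ for all $i$ and observation (b) give $V(\mathbf{p})=E[T\mid\mathbf{p},\theta]=\sum_{i}p_{i}\,v^{\ast}=v^{\ast}$. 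My plan to secure this box-wise indifference combines two ingredients: first, Proposition~\ref{prop:searcheroptcounter} applied to $\theta$ (now optimal in $G$) forces $\mathrm{supp}(\theta)\subseteq\mathcal{C}^{\mathrm{B}}_{\mathbf{p}^{\ast}}$ for every optimal $\mathbf{p}^{\ast}$ in $G$, and we also have $\mathrm{supp}(\theta)\subseteq\mathcal{D}\subseteq\mathcal{C}^{\mathrm{B}}_{\mathbf{p}}$, so each $\xi$ in the support of $\theta$ realizes a Gittins ordering consistent with both $\mathbf{p}$ and every optimal $\mathbf{p}^{\ast}$; second, finite-LP complementary slackness in $G_{\mathcal{D}}$ between $\theta$ and an optimal hider $\mathbf{p}^{\dagger}$ in $G_{\mathcal{D}}$ yields $E[T\mid i,\theta]=v_{\mathcal{D}}$ for every $i$ in the support of $\mathbf{p}^{\dagger}$. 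The main obstacle is to promote this indifference from $\mathrm{supp}(\mathbf{p}^{\dagger})$ to all of $\{1,\ldots,n\}$: one must show that the dual Gittins-compatibility constraints on $\mathrm{supp}(\theta)$, together with the positivity $p_{i}>0$, leave no room for any box to drop out of an optimal-hider support in $G_{\mathcal{D}}$. Pinning down this full-support conclusion---turning the structural information carried by $\widehat{\mathcal{C}}^{\mathrm{B}}_{\mathbf{p}}$ with $\mathbf{p}$ in the interior of $\Delta^{n}$ into equality of the per-box expected detection times $E[T\mid i,\theta]$---is where I expect the bulk of the work to lie.
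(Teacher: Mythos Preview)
The paper does not actually prove Proposition~\ref{prop:subgamesoln}; it is quoted from \cite{clarkson2022classical}, so there is no in-paper argument to compare against directly. Your implications $(\mathrm{iii})\Rightarrow(\mathrm{i})$ and $(\mathrm{i})\Rightarrow(\mathrm{ii})$ are correct and are essentially the natural arguments: Theorem~\ref{thm:OSSperm} supplies an optimal search strategy supported on $\mathcal{D}=\widehat{\mathcal{C}}^{\rm B}_{\mathbf{p}}$, and the sandwich $V(\mathbf{p})\le v^\ast\le v_{\mathcal{D}}$ then collapses in both steps exactly as you describe.

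The gap is in $(\mathrm{ii})\Rightarrow(\mathrm{iii})$. You correctly identify that it suffices to show $u(i,\theta)=v^\ast$ for every box $i$, but then propose an elaborate and admittedly unfinished route through LP complementary slackness in $G_{\mathcal{D}}$ and the dual Gittins-compatibility of $\mathrm{supp}(\theta)$, conceding that promoting the indifference from $\mathrm{supp}(\mathbf{p}^{\dagger})$ to all of $\{1,\ldots,n\}$ is ``where the bulk of the work lies.'' This detour is unnecessary: the box-wise indifference you want is exactly Proposition~\ref{prop:pstar>0}\ref{prop:pstar>02}. Assumption (ii) says $\theta$ is optimal in $G$, so Proposition~\ref{prop:pstar>0}\ref{prop:pstar>02} gives $u(i,\theta)=v^\ast$ for $i=1,\ldots,n$ immediately; then your observation (b), that $\theta$ is supported on $\mathcal{D}\subseteq\mathcal{C}^{\rm B}_{\mathbf{p}}$ and hence $u(\mathbf{p},\theta)=V(\mathbf{p})$, yields $V(\mathbf{p})=\sum_i p_i\,v^\ast=v^\ast$, which is (iii). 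Without invoking Proposition~\ref{prop:pstar>0}, your complementary-slackness plan runs into a genuine obstacle: an optimal hider $\mathbf{p}^{\dagger}$ in $G_{\mathcal{D}}$ need not, a priori, be optimal in $G$ (since $\mathcal{D}$ consists of Gittins sequences against $\mathbf{p}$, not against $\mathbf{p}^{\dagger}$), so there is no direct reason its support must be full; you would end up re-deriving Proposition~\ref{prop:pstar>0} anyway.
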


Recall the hiding strategy $\mathbf{p}_0 \equiv (p_{0,1}, \ldots ,p_{0,n})$ discussed in Section \ref{sec:intro} with
\begin{equation*} 
p_{0,i} \equiv \frac{t_i/\alpha_i}{\sum_{j=1}^n t_j/\alpha_j}, \quad i=1,\ldots , n,
\end{equation*}
which is known to be optimal in several special cases, and is found numerically to be optimal by both \cite{GitRob1} and \cite{GitRob2} in many (but not all) unit-search-time problems. 
The hiding strategy $\mathbf{p}_0$ creates a tie between the Gittins indices of all $n$ boxes at the start of the search, giving the searcher no preference over which box to search first. With Proposition~\ref{prop:subgamesoln}, if $n$ is small, we can quickly test whether $\mathbf{p}_0$ is optimal and also compute an optimal search strategy if so. Proposition \ref{prop:subgamesoln} is put into practice in the numerical experiments of Section \ref{sec:numerical}.

If $\mathbf{p}_0$ is suboptimal, Proposition \ref{prop:subgamesoln} is less useful for finding an optimal hiding strategy, due to the infinite number of hiding strategies available. For this instance, we provide an algorithm in Section \ref{sec:algorithm} that estimates an optimal strategy for each player by successively computing tighter bounds on the value $v^*$. To develop the algorithm, we need some more results. We begin by reciting one more from \cite{clarkson2022classical}.

Define $u(i, \xi)$ as the expected time to detection if the hider hides in box $i$ and the searcher uses search sequence $\xi \in \mathcal{C}$.
In addition, define $u(i, \theta)$, $u(\mathbf{p}, \xi)$, and $u(\mathbf{p}, \theta)$ analogously where $\mathbf{p}$ is a mixed strategy for the hider and $\theta$ is a mixed strategy for the searcher.

\begin{proposition} \label{prop:pstar>0}
The following statements are true.
\begin{enumerate} [label={(\roman*)}]
\item If $\mathbf{p}^*\equiv(p_1^*,\ldots , p_n^*)$ is optimal for the hider, then $p_i^*>0$ for $i = 1, \ldots ,n$. \label{prop:pstar>01}
\item \label{prop:pstar>02} If $\theta^*$ is optimal for the searcher, then $u(i,\theta^*) = v^*$ for $i = 1,\ldots,n$.
\end{enumerate}
\end{proposition}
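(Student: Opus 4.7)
The plan is to prove part~\ref{prop:pstar>01} by contradiction using Proposition~\ref{prop:searcheroptcounter}, and then to derive part~\ref{prop:pstar>02} as a short consequence of part~\ref{prop:pstar>01} together with the saddle-point equality.

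For part~\ref{prop:pstar>01}, suppose toward a contradiction that some box $j$ has $p_j^* = 0$. Let $\theta^*$ be any optimal search strategy; by Proposition~\ref{prop:searcheroptcounter}, $\theta^*$ is a mixture of sequences drawn from $\mathcal{C}_{\mathbf{p}^*}^{\rm B}$. Inspecting the Gittins index in \eqref{eq:simpleGI}, box $j$ has index identically $0$, whereas every box $i$ with $p_i^* > 0$ retains a strictly positive index $p_i^*(1-\alpha_i)^{m_i}\alpha_i/t_i$ at every stage of the search, since $\alpha_i \in (0,1)$ keeps $(1-\alpha_i)^{m_i} > 0$. Because $\sum_i p_i^* = 1$, at least one such box always exists, so box $j$ is never an argmax in \eqref{eq:simpleGI} and consequently never searched in any $\xi \in \mathcal{C}_{\mathbf{p}^*}^{\rm B}$. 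This forces $u(j,\theta^*) = +\infty$, contradicting the inequality $u(i,\theta^*) \leq v^*$ which must hold for every pure hider response $i$ by optimality of $\theta^*$. (Note $v^* < \infty$: a deterministic cyclic search strategy, for example, detects the hider in finite expected time against any hiding distribution.)

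For part~\ref{prop:pstar>02}, combine the saddle-point identity
\[
v^* \;=\; u(\mathbf{p}^*, \theta^*) \;=\; \sum_{i=1}^n p_i^* \, u(i,\theta^*)
\]
with the inequalities $u(i,\theta^*) \leq v^*$ from optimality of $\theta^*$ and $p_i^* > 0$ for all $i$ from part~\ref{prop:pstar>01}. Any strict inequality $u(k,\theta^*) < v^*$ for some $k$ would make the right-hand side strictly smaller than $v^*\sum_i p_i^* = v^*$, a contradiction; hence $u(i,\theta^*) = v^*$ for every $i$.

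The main obstacle is the structural claim in part~\ref{prop:pstar>01} that a box with zero hiding probability is never searched by any Gittins sequence against $\mathbf{p}^*$. This hinges on the strict gap between the zero and the positive Gittins indices at every stage, together with Proposition~\ref{prop:searcheroptcounter} restricting $\theta^*$ to mixtures of such sequences; once this is secured, part~\ref{prop:pstar>02} is routine bookkeeping.
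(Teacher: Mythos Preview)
The paper does not supply its own proof of this proposition; it is stated as a result ``recited'' from \cite{clarkson2022classical}, so there is no in-paper argument to compare against. That said, your proof is correct and self-contained within the framework laid out in Section~\ref{sec:prelim}: part~\ref{prop:pstar>01} follows cleanly from Proposition~\ref{prop:searcheroptcounter} and the observation that a zero-probability box has Gittins index identically zero (hence is never visited by any sequence in $\mathcal{C}_{\mathbf{p}^*}^{\rm B}$), while part~\ref{prop:pstar>02} is the standard complementary-slackness consequence of part~\ref{prop:pstar>01} and the saddle-point equality. Your appeal to the existence of an optimal $\theta^*$ and to $v^* < \infty$ is legitimate, as both are established earlier in the paper (the former via \cite{clarkson2022classical}, the latter explicitly in Lemma~\ref{le:UB}).
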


Proposition \ref{prop:pstar>0}\ref{prop:pstar>01} shows that $p_i^* \in (0,1)$ for $i=1,\ldots,n$. We next present two lemmas which we will combine to show that $p_i^*$ is bounded below by some strictly positive number, for $i=1, \ldots, n$.

\begin{lemma} \label{lemma:contradictpstar}
Consider some hiding strategy $\mathbf{p} \in \Delta^n$.
If there exists some box $i \in \{1, \ldots, n\}$ such that either $u(i, \xi) > v^*$ for all $\xi \in \mathcal{C}_{\mathbf{p}}^{\rm B}$ --- or $u(i, \xi) < v^*$ for all $\xi \in \mathcal{C}_{\mathbf{p}}^{\rm B}$ --- then $\mathbf{p}$ is not optimal for the hider. 
\end{lemma}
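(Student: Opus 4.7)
The plan is to argue by contradiction, combining Proposition~\ref{prop:searcheroptcounter} with Proposition~\ref{prop:pstar>0}\ref{prop:pstar>02}. Suppose, for contradiction, that $\mathbf{p}$ \emph{is} optimal, i.e.\ $\mathbf{p} = \mathbf{p}^*$. The paper has already established that an optimal search strategy $\theta^*$ exists, so fix such a $\theta^*$.

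First I would invoke Proposition~\ref{prop:searcheroptcounter} to conclude that $\theta^*$ is supported only on sequences in $\mathcal{C}_{\mathbf{p}}^{\rm B}$; call this finite support $S \subseteq \mathcal{C}_{\mathbf{p}}^{\rm B}$. Since $u$ is linear in the searcher's mixed strategy, we can write
\[
u(i,\theta^*) \;=\; \sum_{\xi \in S} \theta^*(\xi)\, u(i,\xi),
\]
which is a convex combination of the values $u(i,\xi)$ for $\xi \in \mathcal{C}_{\mathbf{p}}^{\rm B}$.

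Next I would apply the hypothesis to the distinguished box $i$. If $u(i,\xi) > v^*$ for every $\xi \in \mathcal{C}_{\mathbf{p}}^{\rm B}$, then every term in the convex combination strictly exceeds $v^*$, so $u(i,\theta^*) > v^*$. Symmetrically, if $u(i,\xi) < v^*$ for every $\xi \in \mathcal{C}_{\mathbf{p}}^{\rm B}$, then $u(i,\theta^*) < v^*$. Either way, this contradicts Proposition~\ref{prop:pstar>0}\ref{prop:pstar>02}, which requires $u(i,\theta^*) = v^*$. Hence $\mathbf{p}$ cannot be optimal.

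I do not expect a real obstacle here: the lemma is essentially a direct consequence of the two propositions cited above, and the only mild care needed is to note that $\theta^*$ has finite support (so the convex-combination manipulation is well defined) and that the strict inequality is preserved by taking a convex combination with nonnegative weights summing to one. No further structural properties of Gittins sequences are required.
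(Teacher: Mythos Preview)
Your proposal is correct and follows essentially the same argument as the paper's proof: assume $\mathbf{p}$ is optimal, use Proposition~\ref{prop:searcheroptcounter} to conclude any optimal $\theta^*$ mixes only sequences in $\mathcal{C}_{\mathbf{p}}^{\rm B}$, deduce $u(i,\theta^*) \neq v^*$, and contradict Proposition~\ref{prop:pstar>0}\ref{prop:pstar>02}. Your version is slightly more explicit about the convex-combination step, but the structure and the two propositions invoked are identical.
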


\begin{proof}
We use proof by contradiction. 
Suppose that $\mathbf{p}$ is optimal for the hider and $u(i, \xi) > v^*$ for all $\xi \in \mathcal{C}_{\mathbf{p}}^{\rm B}$ for some box $i \in \{1, \ldots, n\}$.
Write $\theta^*$ for an optimal search strategy.
According to Proposition \ref{prop:searcheroptcounter}, $\theta^*$ mixes only search sequences in $\mathcal{C}_{\mathbf{p}}^{\rm B}$, so $u(i, \theta^*)>v^*$, which contradicts Proposition \ref{prop:pstar>0}\ref{prop:pstar>02}.
Hence, $\mathbf{p}$ cannot be optimal for the hider. 
The case $u(i, \xi)<v^*$ can be proved with a similar argument.
\end{proof}

\begin{lemma}
The value $v^*$ of the search game $G$ is bounded above by $\sum_{i=1}^n t_i/\al_i$ and below by $\max_{i \in \{1, \ldots , n\}} t_i/\al_i$.
\label{le:UB}
\end{lemma}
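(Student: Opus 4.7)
The plan is as follows.

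For the lower bound, I take the pure hider strategy $\mathbf{e}_{i^*}$ that places all probability on some $i^* \in \argmax_{i} t_i/\al_i$. Conditional on the hider being in box $i^*$, each search of box $i^*$ independently detects the hider with probability $\al_{i^*}$, so the number of searches of box $i^*$ until detection is geometric with parameter $\al_{i^*}$ and mean $1/\al_{i^*}$, contributing at least $t_{i^*}/\al_{i^*}$ to the expected detection time. Since searches of other boxes only add non-negative time, $u(\mathbf{e}_{i^*}, \xi) \geq t_{i^*}/\al_{i^*}$ for every $\xi \in \mathcal{C}$, and averaging gives $u(\mathbf{e}_{i^*}, \theta) \geq t_{i^*}/\al_{i^*}$ for every mixed $\theta$. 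Therefore $v^* = \max_\mathbf{p}\min_\theta u(\mathbf{p},\theta) \geq \min_\theta u(\mathbf{e}_{i^*},\theta) \geq \max_{i} t_i/\al_i$.

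For the upper bound, I exhibit a search strategy whose expected detection time is at most $\sum_{j=1}^n t_j/\al_j$ regardless of the hider's location. Consider the memoryless randomized strategy $\theta_0$ under which, independently at each step, the searcher searches box $i$ with probability $q_i \equiv (1/\al_i)/\sum_{k=1}^n(1/\al_k)$. Conditional on the hider being in box $i$, each step detects with probability $q_i\al_i = c$, where $c \equiv 1/\sum_{k=1}^n(1/\al_k)$ is constant in $i$ by the choice of $q_i$; so the number $N$ of steps until detection is geometric with $E[N] = 1/c$. Because the box searched at each step is independent of the past, conditioning on the event $\{N \geq s\}$ does not change the distribution of the step-$s$ box, and a Wald-type argument yields $u(i,\theta_0) = E[N]\cdot \sum_{j=1}^n q_j t_j = (1/c)\cdot c\sum_{j=1}^n t_j/\al_j = \sum_{j=1}^n t_j/\al_j$ for every $i$. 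Hence $\max_\mathbf{p} u(\mathbf{p}, \theta_0) = \sum_{j=1}^n t_j/\al_j$, so $v^* \leq \sum_{j=1}^n t_j/\al_j$.

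The main obstacle is a formality: $\theta_0$ has uncountable support and is thus not a mixed strategy in the sense defined earlier in the excerpt. To bridge the gap, I would either invoke the Blackwell framework under which the game's value equals $\inf_\theta \sup_\mathbf{p} u(\mathbf{p},\theta)$ over all probability measures on $\mathcal{C}$, or build a sequence of finite-support truncations of $\theta_0$ (randomly drawing the first $N$ boxes according to $q$ and then defaulting to a fixed cyclic tail) whose worst-case expected detection time converges to $\sum_{j=1}^n t_j/\al_j$ as $N\to\infty$, yielding the bound in the limit.
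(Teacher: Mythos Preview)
Your lower-bound argument is essentially the paper's: the paper also fixes the hider in a single box $j$ and observes that the searcher can do no better than $t_j/\al_j$, so $v^* \geq \max_j t_j/\al_j$.

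Your upper-bound argument is correct but genuinely different. The paper does not construct a search strategy at all; it simply cites \cite{Norris1962}, who solved the variant where the hider may relocate after every failed search. That game has value $M=\sum_i t_i/\al_i$, and since a mobile hider dominates an immobile one, $v^*\le M$ follows in one line. Your route instead exhibits the memoryless randomized rule $q_i\propto 1/\al_i$, for which $u(i,\theta_0)=M$ for every $i$ via a Wald-type computation. The connection is not accidental: your $\theta_0$ is precisely the searcher's optimal stationary strategy in Norris's mobile-hider game, so you are in effect reproving the half of Norris's result that the paper invokes. Your approach is self-contained and yields the sharper statement that the bound is attained uniformly in $i$; the paper's is shorter because it outsources the work.

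On the formality you flag: it is a non-issue once you use the characterization $v^* = \sup_{\mathbf{p}} \inf_{\xi\in\mathcal{C}} u(\mathbf{p},\xi)$, which the paper already takes from \cite{Blackwell1954}. For any probability law $\theta_0$ on $\mathcal{C}$ (finite support or not) and any $\mathbf{p}$, one has $\inf_\xi u(\mathbf{p},\xi)\le E_{\xi\sim\theta_0}[u(\mathbf{p},\xi)]=u(\mathbf{p},\theta_0)$, so $v^*\le \sup_{\mathbf{p}} u(\mathbf{p},\theta_0)=M$ directly. The finite-support truncation you sketch is unnecessary.
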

\begin{proof}
To derive the upper bound, consider a variation of the search game in which the hider is free to move between boxes after each unsuccessful search, solved by \cite{Norris1962}. The value of this search game with a mobile hider is
\begin{equation} \label{eqn:mobilevalue}
M \equiv \sum_{i=1}^n \frac{t_i}{\al_i}.
\end{equation}
A mobile hider has more options, so \eqref{eqn:mobilevalue} is an upper bound on the value $v^*$ of $G$.

To prove the lower bound, suppose the hider reveals that they will hide in box $j$. The optimal counter for the searcher is to search box $j$ repeatedly until finding the hider, leading to an expected search time of $t_j/\al_j$, which is a lower bound on $v^*$. 
Since this argument applies to any box $j$, the maximal value of $t_i/\al_i$ over all boxes $i \in \{1, \ldots , n\}$ is the best such lower bound on $v^*$.
\end{proof}

\bigskip

We next use Lemmas \ref{lemma:contradictpstar} and \ref{le:UB} to derive a lower bound for $p_i^*$. To begin, define
\begin{equation}
m_i \equiv \left\lfloor \frac{M}{t_i} \right\rfloor + 1;
\label{eq:m_i}
\end{equation}
for $i=1,\ldots,n$, where $M$ is the upper bound on $v^*$ in \eqref{eqn:mobilevalue} from Lemma \ref{le:UB}.
In other words, making $m_i$ searches in box $i$ requires more than $M$ time units. The following bound is based on the idea that if $p_i$ is too small, there must be some box $j$ that the searcher needs to search at least $m_j$ times before searching box $i$ for the first time. 
Consequently, $M$ time units have passed before box $i$ is searched for the very first time.
We can then invoke Lemma \ref{lemma:contradictpstar} to show that $p_i$ cannot be optimal.

\begin{proposition}
\label{prop:simplebound}
For $i=1,\ldots,n$, write
\begin{equation} \label{eqn:delta_i}
c_i \equiv \sum_{j=1, j \neq i}^n \frac{t_j}{\al_j (1-\al_j)^{m_j-1}} \qquad \text{and} \qquad \eta_i \equiv \frac{t_i/\al_i}{t_i/\al_i + c_i},
\end{equation}
where $m_j$ is defined in \eqref{eq:m_i}.
Any optimal hiding strategy $(p_1^*, \ldots, p_n^*)$ must have $p_i^* \geq \eta_i$, for $i=1,\ldots, n$.
\end{proposition}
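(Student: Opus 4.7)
My plan is a proof by contradiction via Lemma \ref{lemma:contradictpstar}: assume $\mathbf{p}$ is optimal yet $p_i < \eta_i$ for some $i$, and I will show that $u(i, \xi) > v^*$ for every $\xi \in \mathcal{C}_{\mathbf{p}}^{\rm B}$, contradicting Lemma \ref{lemma:contradictpstar}. Since Lemma \ref{le:UB} gives $v^* \leq M$, it suffices to show $u(i, \xi) > M$; and because $u(i,\xi)$ is bounded below by the completion time of the first search of box $i$ under $\xi$, it in turn suffices to show that this first search does not occur until after time $M$, uniformly over $\xi \in \mathcal{C}_{\mathbf{p}}^{\rm B}$.

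Fix any $\xi \in \mathcal{C}_{\mathbf{p}}^{\rm B}$ and let $k_{i,j}$ denote the number of searches of box $j \neq i$ performed before box $i$ is searched for the first time in $\xi$. The core claim is that there must exist some $j \neq i$ with $k_{i,j} \geq m_j$, for then the time elapsed before box $i$'s first search is at least $m_j t_j > M$, where the strict inequality is baked into the definition \eqref{eq:m_i}. I would prove this core claim by contrapositive: suppose instead $k_{i,j} \leq m_j - 1$ for every $j \neq i$. At the instant box $i$ is about to be searched for the first time in $\xi$, Definition \ref{def:GIP} forces box $i$'s Gittins index to be maximal among all boxes, so
\[
\frac{p_j (1-\alpha_j)^{k_{i,j}} \alpha_j}{t_j} \leq \frac{p_i \alpha_i}{t_i}, \qquad j \neq i.
\]
Replacing $(1-\alpha_j)^{k_{i,j}}$ by the smaller quantity $(1-\alpha_j)^{m_j - 1}$, solving for $p_j$, summing over $j \neq i$, and using $\sum_{j \neq i} p_j = 1 - p_i$ together with the definition of $c_i$ in \eqref{eqn:delta_i}, the result rearranges cleanly into $p_i \geq t_i/(t_i + \alpha_i c_i) = \eta_i$, contradicting $p_i < \eta_i$.

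The main care point, rather than a deep obstacle, is to justify that the Gittins-index inequality above holds at the moment of box $i$'s first search regardless of how ties among Gittins indices are broken earlier in $\xi$. This follows directly from \eqref{eq:simpleGI}, since the box selected next must always have a maximal index (possibly shared with others). Once this is in place, everything else is algebraic manipulation and the reduction via Lemmas \ref{lemma:contradictpstar} and \ref{le:UB} outlined in the first paragraph.
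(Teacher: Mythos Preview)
Your proof is correct and follows essentially the same approach as the paper: both use the Gittins-index comparison to show that if $p_i < \eta_i$ then some box $j \neq i$ must be searched at least $m_j$ times before box $i$'s first search, forcing $u(i,\xi) > M \geq v^*$ for every $\xi \in \mathcal{C}_{\mathbf{p}}^{\rm B}$ and invoking Lemma~\ref{lemma:contradictpstar}. The only cosmetic difference is that the paper explicitly sets up and solves the linear system \eqref{eqn:lineareqn1}--\eqref{eqn:lineareqn2} to exhibit a specific offending box $j$, whereas you reach the same conclusion by a contrapositive that sums the index inequalities over all $j \neq i$; the algebra is identical.
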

\begin{proof}
Without loss of generality, we prove $p_1^* \geq \eta_1$.
To begin, write $\mathbf{p} = (p_1,\ldots,p_n)$ for some hiding strategy such that 
\begin{equation} \label{eqn:boxjmjsearch}
\frac{p_j \al_j(1-\al_j)^{m_j-1}}{t_j}>\frac{p_1 \al_1}{t_1},
\end{equation}
for some box $j \neq 1$.
Using the Gittins index formula in \eqref{eq:simpleGI}, we see that any Gittins search sequence $\xi \in \mathcal{C}_{\mathbf{p}}^{\rm B}$ searches box $j$ at least $m_j$ times before searching box 1 for the first time.
According to the definition of $m_j$ in \eqref{eq:m_i}, more than $M$ time units have elapsed before box 1 is searched for the first time, so we must have $u(1, \xi) > M \geq v^*$ for all $\xi \in \mathcal{C}_{\mathbf{p}}^{\rm B}$.
Consequently, we can use Lemma~\ref{lemma:contradictpstar} to conclude that $\mathbf{p}$ is not optimal for the hider.

Now write $(\eta_1, p'_2, \ldots, p'_{n})$ for the unique solution to the following system of linear equations:
\begin{align}
\frac{p'_j \al_j (1-\al_j)^{m_j-1}}{t_j} &= \frac{\eta_1 \al_1}{t_1}, \qquad j = 2, \ldots , n;
\label{eqn:lineareqn1}
\\
\eta_1 + \sum_{j=2}^{n} p'_j &= 1.
\label{eqn:lineareqn2}
\end{align}
It is straightforward to verify that solving the preceding yields $\eta_1$ defined in \eqref{eqn:delta_i} with $i=1$. 
If $p_1 < \eta_1$ for some hiding strategy $\mathbf{p}\equiv (p_1, \ldots , p_n)$, then due to \eqref{eqn:lineareqn2} there must exist some $j \neq 1$ such that $p_j > p'_j$.
It then follows from \eqref{eqn:lineareqn1} that $p_j$ and $p_1$ satisfy \eqref{eqn:boxjmjsearch}, so $\mathbf{p}$ is not optimal for the hider.
Consequently, the optimal hiding strategy must have $p_1^* \geq \eta_1$, which concludes the proof.
\end{proof}

The bounds in Proposition \ref{prop:simplebound} are easy to calculate, but need not be tight.
The significance of Proposition \ref{prop:simplebound}, however, is that, by choosing $\delta_i < \eta_i$ (for example, let $\delta_i \equiv 0.99 \, \eta_i$), for $i=1,2,\ldots, n$, we can define a closed set
\begin{equation} \label{eqn:pwithdeltacons}
\Delta^n_+ \equiv \left\{\mathbf{p} : \sum_{i=1}^n p_i = 1;  \; p_i \geq \delta_i \text{ for } i=1,\ldots,n\right\},
\end{equation}
such that any optimal hiding strategy $\mathbf{p}^*$ must lie in the \textit{interior} of $\Delta^n_+$. 

Our final result of this section shows that the payoff $u$ is always bounded when the searcher plays a Gittins search sequence against some hiding strategy in $\Delta^n_+$.

\begin{proposition} \label{prop:bounded_cond_est}
Consider a hiding strategy $\mathbf{p}\in \Delta^n_+$. For any search sequence $\xi \in \mathcal{C}_{\mathbf{p}}^{\rm B}$, we have $u(i,\xi)$ bounded above for $i = 1,\ldots,n$.
\end{proposition}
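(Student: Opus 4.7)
The plan is to prove that the elapsed time by the $k$-th visit of box $i$ grows at most linearly in $k$, so that the geometric decay of the detection probability makes $u(i,\xi)$ a convergent series. Fix the box $i$ of interest and let $T_k$ denote the total time at which the $k$-th search of box $i$ is completed under $\xi$. Since the hider in box $i$ is found on the $k$-th search of box $i$ with probability $(1-\alpha_i)^{k-1}\alpha_i$, we have
\[
u(i,\xi)=\alpha_i\sum_{k=1}^\infty (1-\alpha_i)^{k-1}\,T_k,
\]
which converges as soon as $T_k$ is polynomially bounded in $k$.

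The main step is to bound, for each $j\neq i$, the number $N_j(k)$ of searches of box $j$ conducted by $\xi$ before its $k$-th search of box $i$. When $\xi$ performs its $N_j(k)$-th search of box $j$, box $i$ has been searched at most $k-1$ times, so box $i$'s current Gittins index is at least $p_i(1-\alpha_i)^{k-1}\alpha_i/t_i$. By Definition \ref{def:GIP}, $\xi$ would not have chosen box $j$ next unless its index at that moment were at least as large, which yields
\[
\frac{p_j(1-\alpha_j)^{N_j(k)-1}\alpha_j}{t_j}\;\geq\;\frac{p_i(1-\alpha_i)^{k-1}\alpha_i}{t_i}.
\]
Taking logarithms and using $\log(1-\alpha_j)<0$ gives $N_j(k)\leq a_j k+b_j$ for finite constants $a_j=\log(1-\alpha_i)/\log(1-\alpha_j)>0$ and $b_j$ depending on the game parameters and on $p_i,p_j$. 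These constants are finite precisely because $\mathbf{p}\in\Delta^n_+$ forces $p_i,p_j\geq\delta_i,\delta_j>0$, preventing the logarithm of a ratio from diverging; this is the only place the hypothesis is used.

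With the linear bound in hand, $T_k=kt_i+\sum_{j\neq i}N_j(k)\,t_j\leq Ck+D$ for suitable constants $C,D$, and thus
\[
u(i,\xi)\;\leq\;\alpha_i\sum_{k=1}^\infty(1-\alpha_i)^{k-1}(Ck+D),
\]
a convergent series and hence a finite upper bound.

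The main obstacle I expect is the bookkeeping in the log inequality and the need to keep the constants $a_j,b_j$ honestly finite: if some $p_j$ could vanish, the bound on $N_j(k)$ would collapse. Restricting to $\Delta^n_+$ supplies a uniform positive floor on every coordinate, so the argument goes through and yields an explicit (though not sharp) bound of the form $\alpha_i\big(C/\alpha_i^2+D/\alpha_i\big)$, completing the proof.
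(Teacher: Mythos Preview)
Your argument is correct, and it takes a genuinely different route from the paper. The paper exploits the \emph{optimality} of a Gittins sequence: it introduces the round-robin sequence $\xi'=(1,2,\ldots,n,1,2,\ldots)$, computes $u(i,\xi')$ explicitly, and then notes that $u(\mathbf p,\xi)\leq u(\mathbf p,\xi')$ since $\xi$ is a best response to $\mathbf p$; dividing through by $p_i\geq\delta_i$ gives $u(i,\xi)\leq u(\mathbf p,\xi')/\delta_i$. In contrast, you never invoke optimality; you work directly from the index rule in Definition~\ref{def:GIP} to control the combinatorial structure of $\xi$, showing $T_k$ grows at most linearly and then summing the geometric series. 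The paper's proof is shorter and slicker; yours yields more explicit structural information (the linear visit-time growth and the constants $a_j,b_j$), and in fact your idea reappears later in the paper's Appendix, where a similar index comparison bounds the gap between consecutive visits to the same box. Two small points worth making explicit in a final write-up: (i) the case $N_j(k)=0$ is trivial and needs no inequality; (ii) the formula $u(i,\xi)=\alpha_i\sum_k(1-\alpha_i)^{k-1}T_k$ presumes box $i$ is visited infinitely often, which follows because $p_i>0$ keeps its index strictly positive while every other box's index tends to zero.
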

\begin{proof}
Consider the search sequence $\xi'$ that repeats the cycle of searches $(1,\ldots , n)$ indefinitely. 
Writing $T \equiv \sum_{i=1}^n t_i$, we can compute
\begin{align*}
u(i,\xi') &= \sum_{j=1}^i t_j + T \sum_{k=1}^{\infty} (1-\alpha_i)^k = \sum_{j=1}^i t_j + \frac{1-\alpha_i}{\alpha_i} \; T,
\end{align*}
so $u(\mathbf{p},\xi') = \sum_{i=1}^n p_i u(i,\xi')$ is finite.

For any $\xi \in \mathcal{C}_{\mathbf{p}}^{\rm B}$, we must have 
\[
p_i u(i,\xi) < \sum_{i=1}^n p_i u(i,\xi) = u(\mathbf{p},\xi) \leq u(\mathbf{p},\xi'),
\]
where the first inequality follows since $p_i > 0$ and $u(i, \xi)>0$ for $i=1,\ldots,n$, and the last inequality follows since $\xi$ is an optimal counter to $\mathbf{p}$.
Consequently, for $i=1,\ldots,n$,
$$u(i,\xi) < \frac{u(\mathbf{p},\xi')}{p_i} \leq \frac{u(\mathbf{p},\xi')}{\delta_i},$$
where the last inequality follows since $\mathbf{p} \in \Delta^n_+$.
In other words, $u(i,\xi)$ is bounded above by $u(\mathbf{p},\xi') / \delta_i$.
\end{proof}

\section{Computing Optimal Strategies} \label{sec:algorithm}
This section presents an iterative algorithm to compute the value $v^*$ of the game $G$, and an optimal strategy for each player.
Each iteration of the algorithm involves solving a finite matrix game in which the searcher can choose only from a finite set of search sequences.
The value of each finite matrix game is an upper bound for $v^*$ due to the searcher's finite pure strategy set.
The hiding strategy optimal in the current iteration induces an optimal counter---a Gittins search sequence---which is added to the searcher's finite strategy set to best complement the searcher's repertoire, before the algorithm proceeds to the next iteration.
Through iterations, the expanding sets of search sequences produce a decreasing sequence of upper bounds for $v^*$, which we will prove converges on $v^*$. 
Consequently, we can compute a strategy for each player that guarantees a payoff arbitrarily close to $v^*$ by solving the algorithm's finite game after some number of iterations.

To facilitate presenting the algorithm, define
\[
u(\mathbf{p}) \equiv \min_\xi u(\mathbf{p}, \xi)
\]
as the expected search time if the hider uses mixed strategy $\mathbf{p} \in \Delta^n$ and the searcher uses any optimal counter $\xi \in \mathcal{C}^B_\mathbf{p}$---namely, any Gittins search sequence against $\mathbf{p}$.
The value of $G$ can be written as
\[
v^* = \max_{\mathbf{p} \in \Delta_+^n} u(\mathbf{p}).
\]
We now present this algorithm.



\begin{algorithm}
\label{al:minExpCost}
%
%

\begin{enumerate}
\item \label{step:setup} 
Initialize $L=0$ and $U=\infty$ as a lower bound and an upper bound for $v^*$, respectively.
Pick $\epsilon > 0$ so that when the algorithm stops we have $U/L-1 < \epsilon$.

\item \label{step:initialize}
Recall the hiding strategy $\mathbf{p}_0 \equiv (p_{0,1}, \ldots ,p_{0,n})$ with
\begin{equation} \label{eqn:p0formula}
p_{0,i} \equiv \frac{t_i/\alpha_i}{\sum_{j=1}^n t_j/\alpha_j}, \quad i=1,\ldots , n.
\end{equation}
Compute the $n$ Gittins search sequences of $\widehat{\mathcal{C}}_{\mathbf{p}_0}^{\rm B}$ which break all ties using $n$ preference orderings $(1,2,\ldots,n)$, $(2,3,\ldots,n, 1), \ldots$, $(n,1,\ldots,n-1)$.
Form a set $\mathcal{D}_1$ with these $n$ search sequences and proceed to iteration $k=1$.

\item \label{step:solveandcheck}



In iteration $k$, consider the constrained finite matrix game $G_k$ where the searcher's pure strategy set is $\mathcal{D}_k$ and the hider is constrained to mixed strategies in $\Delta^n_+$ defined in \eqref{eqn:pwithdeltacons}.
Solve the following linear program, which finds an optimal hiding strategy $\mathbf{p}_k$ and the value $v_k$ of $G_k$. 
\begin{align}
\max_{\mathbf{p}, v} &\quad v & \nonumber\\
\rm{s.t.} & \quad v \leq \sum_{i=1}^n p_iu(i,\xi) \quad \text{for all} \quad \xi \in \mathcal{D}_k; \label{eq:C-constraint_alg} \\
& \quad  \quad p_i \geq \delta_i, \; \; i=1,\ldots, n; \label{eq:P-constraint_alg} \\
& \quad \sum_{i=1}^n p_i = 1. \nonumber
\end{align}

\item \label{step:update_bounds}
Update $U_k \leftarrow v_k$. Compute a Gittins search sequence $\xi_k$ against $\mathbf{p}_k$, and use $\xi_k$ to calculate $u(\mathbf{p}_k)$.
Update $L_k \leftarrow  u(\mathbf{p}_k)$.

\item \label{step:check_stop}
If (i) $U_k/L_k - 1 < \epsilon$ and (ii) none of the constraints in \eqref{eq:P-constraint_alg} are binding, then go to step 6; otherwise, update $\mathcal{D}_{k+1} \leftarrow \mathcal{D}_k \cup \{\xi_k\}$ and go to step \ref{step:solveandcheck} for iteration $k+1$.

\item \label{step:calc_search_final}
Calculate the searcher's optimal strategy $\theta$ in $G_k$.
Output $\mathbf{p}_k$, $\theta$, and $U_k$.
\end{enumerate}
\end{algorithm}


If Algorithm \ref{al:minExpCost} terminates in iteration $k$, we have $U_k/L_k -1< \epsilon$.
In step~\ref{step:calc_search_final}, the output hiding strategy guarantees the hider an expected time to detection of at least $L_k$, and the output search strategy guarantees the searcher an expected time to detection of at most $U_k$.
Because $\Delta^n_{+}$ contains any optimal hiding strategy $\mathbf{p}^*$ in $G$, $(\mathbf{p}^*,v^*)$ is a feasible solution to the optimization problem in Step \ref{step:solveandcheck}; therefore, the value $v_k=U_k$ of $G_k$ is an upper bound on $v^*$. 
Furthermore, we have $L_k=u(\mathbf{p}_k)\leq u(\mathbf{p}^*)=v^*$ by definition.
Consequently, we have $L_k \leq v^* \leq U_k$.

The rest of this section consists of three parts. Section \ref{sec:alg_rationale} paints a general picture of how Algorithm \ref{al:minExpCost} works with the help of a diagram for $n=2$. Section \ref{sec:alg_proof} proves that Algorithm \ref{al:minExpCost} terminates, whilst Section \ref{sec:alg_discuss} offers further discussion on Algorithm \ref{al:minExpCost}. 

\subsection{Rationale of Algorithm~\ref{al:minExpCost}} \label{sec:alg_rationale}
The rationale of Algorithm \ref{al:minExpCost} is best understood via an example with $n=2$ boxes, where any mixed hiding strategy $\mathbf{p} = (p, 1-p)$ can be delineated by a single number $p \in [0,1]$ which represents the probability of hiding in box 1.
Figure~\ref{fig:cuttingplane} demonstrates Algorithm \ref{al:minExpCost} in action. Each slanted, straight line represents the expected time to detection for a search sequence as the hiding strategy $p$ varies in $[0,1]$. The function $u(p)$ is the lower envelope of the set of all search sequences, hence, a concave function in $p$, as indicated by the bold curve in Figure~\ref{fig:cuttingplane}.
Note that each straight line representing the expected time to detection of a Gittins search sequence is tangential to the curve $u(p)$ only at those hiding strategies for which it is a Gittins search sequence against; at any other point it lies strictly above $u(p)$. 
Since several neighboring hiding strategies can share the same Gittins search sequence, $u(p)$ is piecewise linear.
We seek to determine $v^* \equiv \max_{p \in [0,1]} u(p)$.

\begin{figure}[h!]
\begin{center}
\includegraphics[scale=0.6]{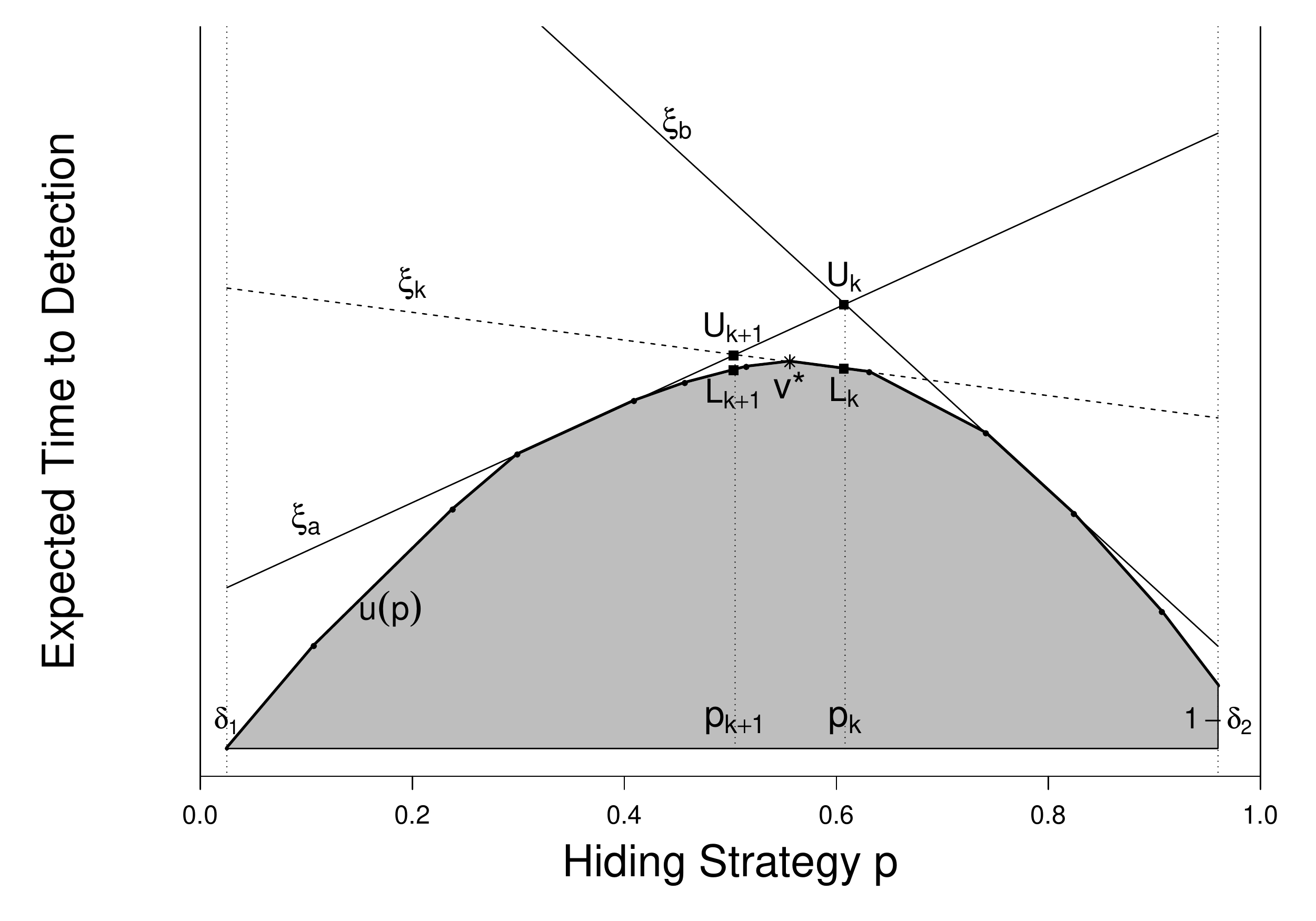}
\caption{A snapshot of Algorithm \ref{al:minExpCost} with $n=2$ for iterations $k$ and $k+1$.}
\label{fig:cuttingplane}
\end{center}
\end{figure}

Consider iteration $k$, when there are $n+k-1$ search sequences in the searcher's set $\mathcal{D}_k$ forming the constraints in \eqref{eq:C-constraint_alg}. In \eqref{eq:P-constraint_alg}, Algorithm \ref{al:minExpCost} also adds constraints $p\geq \delta_1$ and $p\leq 1-\delta_2$, shown by the vertical dotted lines in Figure \ref{fig:cuttingplane}.
If these additional constraints are non-binding at iteration $k$, as is the case in Figure~\ref{fig:cuttingplane}, then $G_k$ is a standard, finite matrix game with an optimal search strategy mixing at most two search sequences, which we represent by the two solid straight lines $\xi_a$ and $\xi_b$.

By optimally mixing $\xi_a$ and $\xi_b$, the searcher guarantees that the expected time to detection is no more than $U_k$, which is an upper bound for $v^*$.
The expected time to detection when the hider plays $p_k$ and the searcher plays a corresponding Gittins search sequence $\xi_k$ is a lower bound for $v^*$, namely $L_k = u(p_k)$.
Therefore, the dashed straight line representing $\xi_k$ touches the curve $u(p)$ at $L_k$ when $p=p_k$.

By adding $\xi_k$ to $\mathcal{D}_k$ to form $\mathcal{D}_{k+1}$ in Step \ref{step:check_stop}, the searcher may include $\xi_k$ in their mixed strategy in $G_{k+1}$, which is advantageous since it cuts off the previous optimal point $(p_k,U_k)$. Since $D_k \subseteq D_{k+1}$, this cut results in a new, tighter upper bound $U_{k+1}$; consequently, the sequence $\{U_k\}$ is weakly decreasing. As seen in Figure~\ref{fig:cuttingplane}, it is possible that $L_{k+1}$ computed in iteration $k+1$ is smaller than $L_k$ computed in iteration $k$, yet Theorem \ref{thm:alg_cnvg} will prove that both sequences $\{U_k\}$ and $\{L_k\}$ converge to the value of the game $v^*$.

For arbitrary $n$, the process behind Algorithm \ref{al:minExpCost} is identical, but the straight lines representing search sequences become hyperplanes in $n$-dimensional space. In particular, for $n=3$, search sequences are represented by planes, whose lower envelope becomes a dome.

\subsection{Algorithm~\ref{al:minExpCost} Terminates} \label{sec:alg_proof}
In this section, we prove that Algorithm~\ref{al:minExpCost} terminates for any $\epsilon >0$ so we can always approximate $v^*$ to arbitrary precision.

\begin{theorem} \label{thm:alg_cnvg}
Algorithm~\ref{al:minExpCost} terminates for arbitrary $\epsilon > 0$.
\end{theorem}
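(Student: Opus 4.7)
My plan is proof by contradiction: suppose the algorithm never terminates. Since $\mathcal{D}_k \subseteq \mathcal{D}_{k+1}$ only enlarges the searcher's strategy set in the LP of step~\ref{step:solveandcheck}, the values $U_k$ are non-increasing; combined with $(\mathbf{p}^*, v^*)$ being feasible in every iteration (because $\mathbf{p}^* \in \Delta^n_+$ by Proposition~\ref{prop:simplebound} and our choice $\delta_i < \eta_i$), the sequence converges to some $U_\infty \ge v^*$. Compactness of $\Delta^n_+$ then lets me extract a subsequence with $\mathbf{p}_{k_j} \to \bar{\mathbf{p}} \in \Delta^n_+$. The two goals are to show that at some $k_j$ both (i) $U_{k_j}/L_{k_j} - 1 < \epsilon$ and (ii) the constraints in \eqref{eq:P-constraint_alg} are non-binding, contradicting non-termination.

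The core estimate is a standard cutting-plane argument. For any $j' > j$, the update in step~\ref{step:check_stop} places $\xi_{k_j}$ in $\mathcal{D}_{k_{j'}}$, so constraint \eqref{eq:C-constraint_alg} at iteration $k_{j'}$ gives $U_{k_{j'}} \le u(\mathbf{p}_{k_{j'}}, \xi_{k_j})$. Linearity of $u(\cdot, \xi_{k_j})$ in $\mathbf{p}$ and $\mathbf{p}_{k_{j'}} \to \bar{\mathbf{p}}$ as $j' \to \infty$ then yield $U_\infty \le u(\bar{\mathbf{p}}, \xi_{k_j})$. Since $L_{k_j} = u(\mathbf{p}_{k_j}, \xi_{k_j})$ by step~\ref{step:update_bounds},
\[
u(\bar{\mathbf{p}}, \xi_{k_j}) - L_{k_j} \;=\; \sum_{i=1}^n (\bar{p}_i - p_{k_j, i})\, u(i, \xi_{k_j}).
\]
The main obstacle is to bound the $u(i, \xi_{k_j})$ uniformly in $j$, since each $\xi_{k_j}$ is an infinite search sequence varying with $j$. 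Proposition~\ref{prop:bounded_cond_est} is tailored precisely for this: it gives $u(i, \xi_{k_j}) < u(\mathbf{p}_{k_j}, \xi')/\delta_i$ for the round-robin sequence $\xi'$, and $u(\cdot, \xi')$ is continuous on the compact $\Delta^n_+$, so the bound is uniform in $j$. The displayed difference therefore tends to $0$, giving $U_\infty \le \liminf_j L_{k_j}$; since $L_{k_j} \le v^* \le U_\infty$, I conclude $U_\infty = v^* = \lim_j L_{k_j}$, and (i) holds at all large $j$.

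For (ii), the function $u = \inf_\xi u(\cdot, \xi)$ is a pointwise infimum of linear functions, hence concave, and is finite and therefore continuous at $\bar{\mathbf{p}}$, which sits in the relative interior of $\Delta^n$ because $\bar{p}_i \ge \delta_i > 0$. So $u(\bar{\mathbf{p}}) = \lim_j u(\mathbf{p}_{k_j}) = v^*$, making $\bar{\mathbf{p}}$ optimal for the hider in $G$. Proposition~\ref{prop:simplebound} then gives $\bar{p}_i \ge \eta_i$, which by our choice $\delta_i < \eta_i$ is strictly greater than $\delta_i$. Since $\mathbf{p}_{k_j} \to \bar{\mathbf{p}}$, the constraints \eqref{eq:P-constraint_alg} are non-binding at $\mathbf{p}_{k_j}$ for all sufficiently large $j$, establishing (ii). Both stopping conditions therefore hold simultaneously at some iteration $k_j$, contradicting the assumption that the algorithm does not terminate.
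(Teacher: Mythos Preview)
Your proof is correct and follows essentially the same cutting-plane argument as the paper: extract a convergent subsequence by compactness, use the constraint added at iteration $k_j$ to bound $U_{k_{j'}}$ for $j'>j$, invoke Proposition~\ref{prop:bounded_cond_est} for the uniform bound on $u(i,\xi_{k_j})$, deduce $U_\infty=v^*=\lim_j L_{k_j}$, and then apply Proposition~\ref{prop:simplebound} to the limit point $\bar{\mathbf{p}}$ to handle condition~(ii). The only cosmetic difference is that you fix $j$ and send $j'\to\infty$ first before letting $j\to\infty$, whereas the paper takes both indices to infinity along the subsequence simultaneously; the two routes are equivalent.
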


\begin{proof}
Recall that for $\xi \in \mathcal{C}$ we have
$$u(\mathbf{p},\xi)\equiv \sum_{i=1}^n p_i u(i,\xi),$$
which is a linear function in $\mathbf{p}\equiv(p_1,\ldots,p_n)$. 
Further, recall that
\[
u(\mathbf{p}) \equiv \min_\xi  u(\mathbf{p},\xi)
\]
is the lower envelope of these functions linear in $\mathbf{p}$, thus is a concave function in $\mathbf{p}$. 

\begin{figure}[h!]
\begin{center}
\includegraphics[scale=0.6]{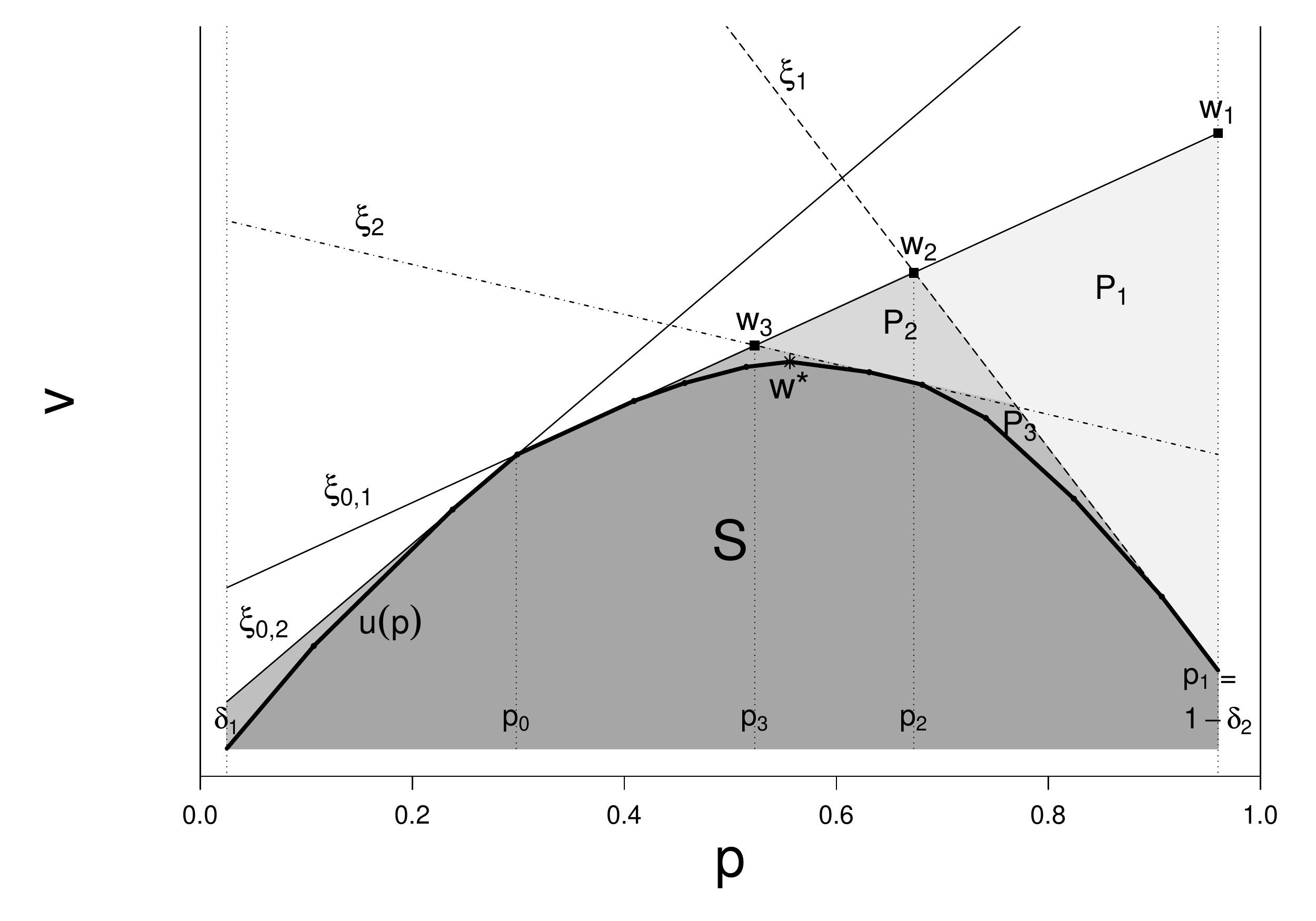}
\caption{Example of the beginning of Algorithm \ref{al:minExpCost} for $n=2$ if $p_0$ is not optimal.}
\label{fig:cuttingplane_proof}
\end{center}
\end{figure}

Consider the following optimization problem:
\begin{align}
\max_{\mathbf{p}, v} &\quad v & \nonumber\\
\rm{s.t.} & \quad v \leq u(\mathbf{p}); \label{eq:C-constraint} \\
& \quad p_i \geq \delta_i, \; \; i=1,\ldots, n; \label{eq:P-constraint} \\
& \quad \sum_{i=1}^n p_i = 1,\;  v\geq 0. \label{eq:sum1_v-cons}
\end{align}
The set of feasible solutions $S$ can be considered as a subset of $\mathbb{R}^n$ with elements of the form $(p_1,\ldots,p_{n-1},v)$ due to $\sum_{i=1}^n p_i=1$.
Since $\mathbf{p}=(p_1,\ldots,p_n)$, however, throughout the proof we shall write the $n$-dimensional vector $(p_1,\ldots,p_{n-1},v)$ as $(\mathbf{p},v)$ for ease of notation.

Since $\Delta^n_+$ in \eqref{eqn:pwithdeltacons} contains all optimal hiding strategies in $G$, solving the optimization problem above finds $\mathbf{w}^* = (\mathbf{p}^*,v^*)$, where $\mathbf{p}^*$ is optimal for the hider in $G$ and $v^* = \max_\mathbf{p} u(\mathbf{p}) = u(\mathbf{p}^*)$ is the value of $G$. It is possible that the hider has multiple $\mathbf{p}^*$ leading to multiple $\mathbf{w}^*$, but $v^*$ is unique.


To help demonstrate the idea of the proof, we will use an example shown in Figure \ref{fig:cuttingplane_proof} with $n=2$ boxes, where any mixed hiding strategy $\mathbf{p}=(p,1-p)$ can be delineated by $p \in [0,1]$.
The $x$-axis corresponds to $p$ and the $y$-axis corresponds to $v$, with each point identified by its coordinates $(p, v)$. The function $u(p)$ is the piecewise linear curve shown in bold.
The solution space $S$ is the area shaded in the darkest shade of grey, bounded above by $u(p)$, to the left and right by $\delta_1$ and $1-\delta_2$, and below by $v=0$. The point $\mathbf{w}^*\equiv(p^*,v^*)$ is the point in $S$ with the largest $v$ coordinate, which lies on the curve $u(p)$.

Write $\xi_{0,1},\ldots, \xi_{0,n}$ for the $n$ Gittins search sequences against $\mathbf{p}_0$ which form $\mathcal{D}_1$ in Step \ref{step:initialize} of Algorithm \ref{al:minExpCost}. The following inequalities define an intersection of half spaces in $\mathbb{R}^n$ with coordinates $(\mathbf{p}, v)$, a convex polytope which we call $P_1$. 
\begin{align}
& v \leq \sum_{i=1}^n p_iu(i,\xi_{0,j}), \; \; j=1,\ldots,n; \label{eqn:P1_vcons}\\
&  p_i \geq \delta_i, \; \; i=1,\ldots,n; \label{eqn:P1_pcons} \\
& \sum_{i=1}^n p_i = 1, \; v \geq 0. \label{eqn:P1_sum1_v-cons}
\end{align}
The constraints in \eqref{eqn:P1_pcons} and \eqref{eqn:P1_sum1_v-cons} are equivalent to those in \eqref{eq:P-constraint} and \eqref{eq:sum1_v-cons}.
Because
$$u(\mathbf{p})\equiv \min_{\xi\in \mathcal{C}} \sum_{i=1}^n p_iu(i,\xi) \leq \sum_{i=1}^n p_iu(i,\xi_{0,j}), \; \;  j=1,\ldots, n,$$
the constraints in \eqref{eqn:P1_vcons} are less restrictive than those in \eqref{eq:C-constraint}. It follows that the feasible solution space $S$ to the aforementioned optimization problem with solution $\mathbf{w}^*$ is contained in the polytope $P_1$. In Figure~\ref{fig:cuttingplane_proof}, $\xi_{0,1}$ and $\xi_{0,2}$ denote the hyperplanes induced by \eqref{eqn:P1_vcons}, and $P_1$ comprises all four shades of grey, which is the area enclosed by 5 straight lines, namely $\xi_{0,1}$, $\xi_{0,2}$, $p=\delta_1$, $p=1-\delta_2$, and $v=0$.

In each iteration of Algorithm \ref{al:minExpCost}, we add a new inequality of the same form to those already in \eqref{eqn:P1_vcons} to create a smaller polytope containing $S$. The result is a sequence of polytopes satisfying $P_1 \supset P_2 \supset \cdots \supset S$. In Figure~\ref{fig:cuttingplane_proof}, $P_2$ comprises the three darkest shades of grey, and $P_3$ comprises the two darkest shades of grey.

In iteration $k$, Algorithm \ref{al:minExpCost} finds a hiding strategy $\mathbf{p}_k$ in Step~\ref{step:solveandcheck} corresponding to the point with largest $v$ coordinate in polytope $P_k$, a point which we label $\mathbf{w}_k=(\mathbf{p}_k,v_k)$. The points $\mathbf{w}_1, \mathbf{w}_2$ and $\mathbf{w}_3$ are marked in Figure~\ref{fig:cuttingplane_proof}. Since $P_1 \supset P_2 \supset \cdots \supset S$, we also have the set of inequalities
\begin{align}
v_1 \geq v_2 \geq \cdots \geq v^*.
\label{eq:t_monotone}
\end{align}
Because $\{v_k\}$ is a weakly decreasing sequence bounded below by $v^*$, it must have a limit, denoted by $v_\infty \equiv \lim_{k \rightarrow \infty} v_k$.
Clearly $v_\infty \geq v^*$; we will show that $v_\infty = v^*$.

It is trivially true that the points $\{\mathbf{w}_k\}$ are all in $P_1$.
Because $P_1$ is a compact set, it follows that $\{\mathbf{w}_k\}$ has at least one limit point, so $\{\mathbf{w}_k\}$ has a converging subsequence. Pick an arbitrary converging subsequence and denote it by $\{\mathbf{w}_k: k \in \mathcal{K}\}$. Write $\mathbf{w}_\infty$ for the limit of the subsequence $\{\mathbf{w}_k: k \in \mathcal{K}\}$.

Consider $\mathbf{w}_k=(\mathbf{p}_k,v_k)$ in iteration $k \in \mathcal{K}$ of Algorithm \ref{al:minExpCost}. Step \ref{step:check_stop} adds a Gittins search sequence $\xi_k$ against $\mathbf{p}_k$ to the searcher's repertoire, which results in a hyperplane with equation
\begin{equation}
v=\sum_{i=1}^n p_{i}u(i,\xi_k), \label{eq:cut_first}
\end{equation}
creating polytope $P_{k+1}$ from $P_k$ and cutting off the point $\mathbf{w}_k$.
In Figure \ref{fig:cuttingplane_proof}, the hyperplane added to cut off $\mathbf{w}_1=(p_1,v_1)$ and create polytope $P_2$ is a Gittins search sequence against $p_1=1-\delta_2$ shown by the dashed line $\xi_1$, which intersects $\xi_{0,1}$ at $\mathbf{w}_2=(p_2,v_2)$. In the next iteration, $\mathbf{w}_2$ is cut off by adding a Gittins search sequence $\xi_2$ against $p_2$, which creates polytope $P_3$.

Since $\xi_k$ is a Gittins search sequence against $\mathbf{p}_k$, the point $(\mathbf{p}_k,u(\mathbf{p}_k))$ lies on the hyperplane in \eqref{eq:cut_first}, so \eqref{eq:cut_first} may also be expressed as
\begin{equation}
v=u(\mathbf{p}_k)+\sum_{i=1}^n (p_i-p_{i,k}) u(i,\xi_k),
\label{eq:cut}
\end{equation}
where $\mathbf{p}_k \equiv (p_{1,k},\ldots , p_{n,k})$.

If $k \in \mathcal{K}$, $k' \in \mathcal{K}$, and $k' > k$, then $\mathbf{w}_{k'}=(\mathbf{p}_{k'},v_{k'})$ is in polytope $P_{k+1}$. Since the hyperplane in \eqref{eq:cut} bounds $P_{k+1}$, we have
\begin{align}
v_{k'} \leq u(\mathbf{p}_k) + \sum_{i=1}^n (p_{i,k'}-p_{i,k}) u(i,\xi_k).
\label{eq:cut_leq}
\end{align}
As $k$ tends to infinity in $\mathcal{K}$, we must have $p_{i,k'} - p_{i,k} \rightarrow 0$ for $i=1,\ldots , n$ because $\{\mathbf{w}_k: k \in \mathcal{K}\}$ is a converging sequence. Since $\xi_k$ is a Gittins search sequence against $\mathbf{p}_k \in \Delta^n_+$, we have $u(i,\xi_k)$ bounded above for $i=1,\ldots,n$, according to Proposition \ref{prop:bounded_cond_est}. Therefore, we must have $(p_{i,k'} - p_{i,k}) u(i,\xi_k) \rightarrow 0$ for $i=1,\ldots , n$ as $k$ tends to infinity in $\mathcal{K}$.
In addition, we have $v_{k'} \rightarrow v_\infty$ and $u(\mathbf{p}_k) \rightarrow u(\mathbf{p}_\infty)$, so by \eqref{eq:cut_leq} we must have
\begin{equation}
v_\infty \leq u(\mathbf{p}_\infty),
\label{eq:v_infty}
\end{equation}
By the definition of $\mathbf{p}^*$ we have $u(\mathbf{p}_{\infty}) \leq u(\mathbf{p}^*)=v^*$. Together with \eqref{eq:t_monotone} and \eqref{eq:v_infty}, we conclude that $v^*=v_{\infty}=u(\mathbf{p}_{\infty})$. Consequently, the sequences of upper and lower bounds $\{U_k\}$ and $\{L_k\}$ both tend to $v^*$, so condition (i) in Step~\ref{step:check_stop} of Algorithm~\ref{al:minExpCost} will eventually be met.

Now consider condition (ii) in Step~\ref{step:check_stop} of Algorithm~\ref{al:minExpCost}. Since $u(\mathbf{p}_{\infty})=v^*$, we have $\mathbf{p}_{\infty} \equiv (p_{1,\infty},\ldots , p_{n,\infty})$ optimal for the hider. By Proposition \ref{prop:simplebound}, we therefore have $p_{i,\infty} > \delta_i$ for $i=1,\ldots , n$. Since the sequence $\{\mathbf{p}_k: k \in \mathcal{K}\}$ converges to $\mathbf{p}_{\infty}$ elementwise, it follows that condition (ii) must also be eventually met. The proof is completed.
\end{proof}

\subsection{Further Discussion on Algorithm~\ref{al:minExpCost}} \label{sec:alg_discuss}
In this section, we further discuss a few steps of Algorithm \ref{al:minExpCost}.

In Step \ref{step:initialize}, we use the hiding strategy $\mathbf{p}_0$ in \eqref{eqn:p0formula} to generate Gittins search sequences in $\widehat{\mathcal{C}}_{\mathbf{p}_0}$ to form the starting set $\mathcal{D}_1$. Recall that $\mathbf{p}_0$ creates a tie between all $n$ Gittins indices in \eqref{eq:simpleGI} at the start of the search, giving the searcher no preference over which box to search first. 
\cite{GitRob1} and \cite{GitRob2} numerically find that $\mathbf{p}_0$ often serves as a decent approximation to the optimal hiding strategy, in which case Gittins search sequences against $\mathbf{p}_0$ will be useful for the searcher.

As $n$ increases, it may become less computationally practical to form $\mathcal{D}_1$ with all $n!$ elements of $\widehat{\mathcal{C}}_{\mathbf{p}_0}$.
Further, even for small $n$, there is no numerical evidence that forming $\mathcal{D}_1$ with more than $n$ elements of $\widehat{\mathcal{C}}_{\mathbf{p}_0}$ would speed up convergence.
In Algorithm \ref{al:minExpCost}, we choose $n$ elements of $\widehat{\mathcal{C}}_{\mathbf{p}_0}$ by cycling the preference ordering $(1,\ldots, n)$ as described in step \ref{step:initialize} to include a variety of tie-breaking strategies against $\mathbf{p}_0$.

In Step \ref{step:solveandcheck}, it is critical to include the constraints $p_i \geq \delta_i$, for $i=1,\ldots, n$, when solving the matrix game.
As seen in Figure~\ref{fig:cuttingplane_proof}, when $n=2$ and $p_0$ is not optimal, since $u(p_0) < \max_{p}u(p)$ and $u(p)$ is concave, the tangents $\xi_{0,1}$ and $\xi_{0,2}$ which form $\mathcal{D}_1$ must slope in the same direction. Therefore, without the $\delta_i$ constraints, $\mathbf{w}_1$ would always correspond to the hiding strategy $(1,0)$ with which the hider hides in box 1 with probability 1.
The unique Gittins search sequence against $(1,0)$ never searches box 2, and would hence result in $u(2, \xi_1) = \infty$.
Adding $\xi_1$ to $\mathcal{D}_1$ does not help the searcher at all and thus prohibits the algorithm from moving forward.
å††The choice of $\delta_i$ in the constraints restricts the hider to the set $\Delta_{+}^n$ which still contains all hiding strategies optimal in $G$, so does not prevent the termination of Algorithm \ref{al:minExpCost}.



In Step \ref{step:update_bounds}, the bounds are updated.
As discussed in Section \ref{sec:alg_rationale}, since $D_k \subseteq D_{k+1}$, the sequence $\{U_k\}$ is weakly decreasing.
Yet, suppose we have $U_k=U_{k+1}$ for some iteration $k$. In this case, since the set of hiding strategies do not change from iteration $k$ to iteration $k+1$, the searcher must have a strategy optimal in $G_{k+1}$ that is available in $G_k$. Therefore, $\mathbf{p}_k$, optimal for the hider in $G_k$, is also optimal in $G_{k+1}$. It follows that $\mathbf{p}_k$ guarantees the hider an expected time to detection of $U_{k+1}$ in $G_{k+1}$. Since a Gittins search sequence $\xi_k$ against $\mathbf{p}_k$ is available to the searcher in $G_{k+1}$, $U_{k+1}$ cannot be greater than $u(\mathbf{p}_{k}) \leq v^*$. Since $U_{k+1}$ is an upper bound on $v^*$, we must have $U_k=U_{k+1}=v^*$. We conclude that, in fact, the sequence $\{U_k\}$ is \emph{strictly} decreasing until we have found $v^*$.

In Step \ref{step:check_stop}, we check two conditions before terminating the algorithm.
In particular, condition (ii) checks if the constraints in \eqref{eq:P-constraint_alg} hinder the hider's ability to seek a better strategy in $G_k$.
If none of these constraints are binding, then we know $\mathbf{p}_k$ is also optimal in $G_k$ if these constraints are removed, so $\mathbf{p}_k$ also guarantees $U_k$ to an unconstrained hider.

\section{Numerical Study} \label{sec:numerical}
This section presents several numerical experiments to demonstrate Algorithm \ref{al:minExpCost} and evaluate the performance of $\mathbf{p}_0$ defined in \eqref{eqn:p0formula} as a heuristic strategy for the hider.

To generate search times and detection probabilities for a set of $n$ boxes, we draw
\begin{equation} \label{eqn:acyclicdraw}
\alpha_i \sim U(\alpha_l,\alpha_u), \quad t_i \sim U(1,5), \quad i=1,\ldots,n.
\end{equation}
We study four schemes based on different values of $\alpha_l,\alpha_u \in (0,1)$, as seen in Table \ref{tab:dpstvar}.
\begin{table}
\caption{Sample schemes used throughout the numerical study by values of $\alpha_l$ and $\alpha_u$ used in \eqref{eqn:acyclicdraw}.} \label{tab:dpstvar}
\begin{center}  
\begin{tabular}{|c| c|} 
 \hline
Sample Scheme & $[\alpha_l,\alpha_u]$ \\
\hline
Varied & $[0.1,0.9]$ \\
Low    & $[0.1,0.5]$ \\
Medium & $[0.3,0.7]$ \\
High   & $[0.5,0.9]$ \\
\hline
\end{tabular}
\end{center}
\end{table}
Search games with $n=2$, $3$, $5$ and $8$ boxes will be investigated. To account for increased variation within a search game as $n$ increases, for each value of $n$ and sample scheme, we study $n \times 1000$ search games. Results are presented in Section \ref{sec:numerres}, but we first discuss our method to calculate conditional expected times to detection based on the hider's location under a Gittins search sequence.

\subsection{Calculating Expected Time to Detection} \label{sec:numercalcs}
The expected value of any nonnegative-valued random variable $X$ can be calculated by $E[X] = \int_0^\infty P\{X>x\} \, dx$. Using this formula, the expected time to detection if the searcher uses a search sequence $\xi$ and the hider hides in box $i \in \{1, \ldots,n\}$ can be calculated by
\begin{equation} \label{eqn:genv_i(xi)}
u(i,\xi) = \tau_i(1,\xi) + \lim_{R \rightarrow \infty} \sum_{r=1}^{R} (1-\alpha_i)^{r} [\tau_i(r+1,\xi)-\tau_i(r,\xi)],
\end{equation}
where $\tau_i(r,\xi)$ is the time at which the $r$th search of box $i$ is made under $\xi$. 

If $\xi$ is a Gittins search sequence against some hiding strategy $\mathbf{p}$, the terms $\tau_i(r,\xi)$ are determined by the Gittins indices in \eqref{eq:simpleGI} and the rule used by $\xi$ to break ties between indices.
In order to calculate $\tau_i(r,\xi)$ for a $\xi$ which uses a \emph{specific} tie-breaking rule, the searcher needs to properly recognize a tie between these Gittins indices.
Comparing indices directly, however, does not yield reliable results because the indices are encoded as floating-point numbers. However, we will see in the following that we can apply Algorithm \ref{al:minExpCost} to search problems generated using \eqref{eqn:acyclicdraw} without such concerns.

First, note that under \eqref{eqn:acyclicdraw}, for any distinct $i,j \in \{1,\ldots, n\}$, we draw $\alpha_i$ and $\alpha_j$ from a continuous uniform distribution. Therefore, the event $\log(1-\alpha_i)/\log(1-\alpha_j) \in \mathbb{Q}$ has probability 0, so
\begin{equation} \label{eqn:acyclic}
(1-\al_i)^{x}\neq (1-\al_j)^{y}
\end{equation}
is satisfied almost surely for any strictly positive integers $x$ and $y$.
In step \ref{step:initialize} of Algorithm \ref{al:minExpCost}, we need to evaluate $n$ Gittins search sequences against $\mathbf{p}_0$ which break ties between indices using a fixed preference ordering, such as $(1,2,\ldots,n)$.
Write $\xi_\sigma \in \widehat{\mathcal{C}}_{\mathbf{p}_0}^{\rm B}$ for the search sequence that uses preference ordering $\sigma$. At the beginning of the search, all $n$ indices are tied, so the first $n$ searches of $\xi_\sigma$ will correspond to the order of $\sigma$. Yet due to \eqref{eqn:acyclic}, we will not encounter any ties from the $n$th search onwards, so we can reliably calculate the terms $\tau_i(r,\xi_\sigma$) by comparing floating-point indices directly. In step \ref{step:update_bounds} of Algorithm \ref{al:minExpCost}, we also need to evaluate $\xi_k \in \mathcal{C}^{\rm B}_{\mathbf{p}_k}$ where $\mathbf{p}_k$ is a solution to the linear program in step \ref{step:solveandcheck}. Yet, because we can take \emph{any} $\xi_k \in \mathcal{C}^{\rm B}_{\mathbf{p}_k}$ in step \ref{step:update_bounds}, it is inconsequential whether ties between indices are recognized at any point in the search; comparing floating-point indices will always yield $\tau_i(r,\xi_k)$ for \emph{some} $\xi_k \in \mathcal{C}^{\rm B}_{\mathbf{p}_k}$.

To compute both $u(i, \xi_\sigma)$ and $u(i, \xi_k)$, first note that the partial sum of the first $R$ terms in \eqref{eqn:genv_i(xi)} provides a lower bound.
To obtain an upper bound, after the first $R$ searches in box $i$, adopt a search sequence that visits box $i$ at fixed intervals less frequently than any Gittins search sequence.
We increase $R$ until the ratio between the upper and lower bound is within $1 + 10^{-10}$.
See Appendix \ref{append:numerdetails_acyclic} for details.

\subsection{Numerical Results} \label{sec:numerres}
For each generated search game, we first look to test the optimality of $\mathbf{p}_0$ using Proposition \ref{prop:subgamesoln}, which involves solving the finite game $G_\mathcal{D}$ where the searcher is restricted to the set of pure strategies $\mathcal{D} \equiv\widehat{\mathcal{C}}^{\rm B}_{\mathbf{p}_0}$. By Proposition \ref{prop:subgamesoln}, $\mathbf{p}_0$ is optimal in $G$ if and only if $\mathbf{p}_0$ is optimal in $G_{\mathcal{D}}$. Since $G_{\mathcal{D}}$ may have multiple optimal hiding strategies, to determine the optimality of $\mathbf{p}_0$ in $G_{\mathcal{D}}$, we compare $v^*_\mathcal{D}$, the value of $G_\mathcal{D}$, to $u(\mathbf{p}_0)$, the expected time to detection when the hider plays $\mathbf{p}_0$ and the searcher plays any search sequence in $\mathcal{C}^{\rm B}_{\mathbf{p}_0}$. In principle, $\mathbf{p}_0$ is optimal in $G_{\mathcal{D}}$ if and only if $u(\mathbf{p}_0)$ and $v^*_\mathcal{D}$ are equal. 
For the purpose of this numerical study, we accept that $\mathbf{p}_0$ is optimal if $|u(\mathbf{p}_0) - v^*_\mathcal{D}| / v^*_\mathcal{D} < 10^{-9}$.
Table \ref{tab:results} presents the percentage of search games in which $\mathbf{p}_0$ is optimal for different sample schemes for $n=2, 3, 5$.
We do not use Proposition \ref{prop:subgamesoln} to test the optimality of $\mathbf{p}_0$ for $n=8$, since $|\widehat{\mathcal{C}}^{\rm B}_{\mathbf{p}_0}|\in\{1,\ldots,n!\}$, so the test could require the generation of $8!=40320$ search sequences.

Next, for each search game where Proposition \ref{prop:subgamesoln} finds $\mathbf{p}_0$ to be suboptimal, we run Algorithm \ref{al:minExpCost} to estimate the value $v^*$ and optimal strategies. Recall that Algorithm~\ref{al:minExpCost} terminates when the ratio of the upper and lower bound on $v^*$ is within $1+\epsilon$. Table \ref{tab:sseqsconveps} reports the mean and 95th percentile of the number of iterations needed for convergence under the \emph{varied} sample scheme for $\epsilon=10^{-3}, 10^{-6}$. Since step \ref{step:initialize} of Algorithm \ref{al:minExpCost} initializes $\mathcal{D}_1$ with $n$ search sequences, and step \ref{step:check_stop} adds one search sequence for the following iteration, in iteration $k$ there are $n+k-1$ search sequences in $\mathcal{D}_k$.

\begin{table}[htb!]
\caption{The mean (95th percentile) of the number of iterations required for convergence of Algorithm \ref{al:minExpCost}, for the \textit{varied} sample scheme. 
} 
\label{tab:sseqsconveps}
\begin{center}
\begin{tabular}{|c|c c|}
\hline 
$n$ & $\epsilon=10^{-3}$ & $\epsilon=10^{-6}$ \\ 
\hline
2  & 4.47 (5) & 6.63 (9)   \\
3  & 10.3 (13) & 15.9 (21)\\
5  & 28.7 (36) & 44.8 (58) \\
8  & 73.1 (92) & 113 (144) \\
\hline
\end{tabular}
\end{center}
\end{table}

We next assess the quality of $\mathbf{p}_0$ as a heuristic for the hider, extending the investigation in \cite{GitRob2} to arbitrary search times and a wider range of detection probabilities.
Table \ref{tab:results} shows the decrease from $v^*$ to $u(\mathbf{p}_0)$ as a percentage of $v^*$ for the sample schemes in Table \ref{tab:dpstvar}, with $v^*$ either deduced to be equal to $u(\mathbf{p}_0)$ by Proposition \ref{prop:subgamesoln}, or otherwise computed by Algorithm \ref{al:minExpCost} with $\epsilon =  10^{-6}$.

\begin{table}[htb!]
\small
\caption[caption]{The mean and percentiles of $u(\mathbf{p}_0)$ as percentage below optimum, and the percentage of search games in which $\mathbf{p}_0$ is optimal, for the sample schemes in Table \ref{tab:dpstvar}. 
}
\label{tab:results}
\begin{center}
\begin{tabular}{|c|c| c c c c|}
\hline 
$n$ & Metric &Varied & Low & Medium & High \\ 
\hline
2 & Mean    & 0.322 &  0.0734  & 0.0581  & 0.0357  \\
&95th Percentile & 1.43 &  0.291 & 0.363 & 0.213  \\
& \% $\mathbf{p}_0$ optimal    & 43.0 & 29.6 & 64.0 & 87.0 \\
\hline
3& Mean    & 0.537  &  0.0992   & 0.0524  & 0.0135  \\
&95th Percentile     & 1.72 &  0.31  & 0.301 & 0.0401  \\
& \% $\mathbf{p}_0$ optimal   & 21.4  & 12.7 & 55.7 & 91.7 \\
\hline
5& Mean    & 0.741 &  0.128  & 0.0441& 0.0012   \\
&95th Percentile    & 1.77 &  0.319  & 0.211 & 0 \\
& \% $\mathbf{p}_0$ optimal    & 7.06&  4.28 &44.4	&97.5  \\
\hline
8  & Mean  & 0.882	&0.148	&0.0335	&0.00003  \\
&95th Percentile      & 1.78	&0.303	&0.147	&0 \\
\hline
\end{tabular}
\end{center}
\end{table}

As seen in Table \ref{tab:results}, $\mathbf{p}_0$ generally performs well as a hiding heuristic for a range of $n$ and, for smaller $n$, often achieves optimality. With the \textit{varied} sample scheme, for each $n$, in 95\% of games $\mathbf{p}_0$ is within 1.78\% of optimality. Therefore, if the hider cannot run Algorithm \ref{al:minExpCost} to estimate $\mathbf{p}^*$, the easily-calculated $\mathbf{p}_0$ performs well as a heuristic. Table \ref{tab:results} shows that the optimality and performance of $\mathbf{p}_0$ depends strongly on both $n$ and the sample scheme. We next explain some patterns observed from Table~\ref{tab:results}.

\subsubsection{Patterns As the Sampling Scheme Varies}
Recall that $\mathbf{p}_0$ equates all $n$ Gittins indices at the start of the search. \cite{Norris1962} shows that if the hider was free to change boxes after every unsuccessful search, it is optimal for the hider to choose a new box according to $\mathbf{p}_0$, independent of previous hiding locations. In other words, it is optimal for a mobile hider to keep the Gittins indices equal throughout the search process. In our search game, the hider hides just once at the start of the search, so it is impossible for the hider to maintain equality of the evolving indices in \eqref{eq:simpleGI}.
Intuitively, the best the hider can do is hide with probability $\mathbf{p}^*$ such that, when the searcher follows a Gittins search sequence against $\mathbf{p}^*$, the indices in \eqref{eq:simpleGI} are as close to being equal as possible throughout the search. 
However, since the probability that the hider remains undetected decreases as time passes, it is more important for the hider to achieve equality in \eqref{eq:simpleGI} earlier in the search rather than later, explaining why $\mathbf{p}_0$ is, in general, a reasonable heuristic for the hider.


The preceding argument explains the following patterns in Table \ref{tab:results}. We see an improvement in performance of $\mathbf{p}_0$ in the \textit{high} sample scheme compared to the \textit{medium} sample scheme compared to the \textit{low} sample scheme, because the larger the detection probabilities, the sooner the hider is likely to be detected, and hence equality in \eqref{eq:simpleGI} near the start of the search takes even more importance. 

We also see an improvement in the performance of $\mathbf{p}_0$ in the \textit{medium} sample scheme, with its narrow range of detection probabilities, compared to the \textit{varied} sample scheme. To explain this phenomenon, we make the following connection to \cite{Clarkson:2020}, where the searcher knows the strategy of the hider, but has a choice between two search modes when searching any box. 
In our search game, the hider chooses $\mathbf{p}$ to make the search last as long as possible, which involves balancing maximizing uncertainty about their location and forcing the searcher into boxes with ineffective search modes.
\cite{Clarkson:2020} introduces two measures of the effectiveness of the search mode $(\alpha_i,t_i)$ of a box $i$. 
The first, called the \emph{immediate benefit}, is measured by $\alpha_i/t_i$. 
The larger the immediate benefit of box $i$, the greater the detection probability per unit time when box $i$ is searched. The second, called the \emph{future benefit}, is measured by
\begin{equation} \label{eqn:fbenefit}
\frac{-\log(1-\alpha_i)}{t_i}.
\end{equation}
If $p_i=p_j$ and box $i$ has a larger future benefit than box $j$, then an unsuccessful search of box $i$ gains more information per unit time about the hider's location than an unsuccessful search of box $j$.
Whilst $\mathbf{p}_0$ takes the immediate benefit of the $n$ boxes' search modes into account by hiding in box $i$ with probability proportional to $t_i/\alpha_i$, the future benefit is ignored by $\mathbf{p}_0$. 

In the game studied in \cite{Norris1962}, the hider may move between boxes after every unsuccessful search, so the game resets after every failed search.
Consequently, $\mathbf{p}_0$ is optimal since information gained by the searcher about the hider's location through an unsuccessful search is useless and hence the future benefit does not apply.
In our search game, however, gaining information about the hider's fixed location enables the searcher to make better box choices later in the search. Therefore, the hider should be dissuaded from hiding in boxes with a large future benefit, as the information-gain advantages of their search modes will benefit the searcher. Since $\mathbf{p}_0$ does not take future benefit into account, the larger the variation in future benefit between the $n$ boxes, the worse $\mathbf{p}_0$ performs. With the \textit{varied} sample scheme, there is more opportunity for such variation, so $\mathbf{p}_0$ performs worse here than in the narrower \textit{medium} sample scheme.

\subsubsection{Patterns As the Number of Boxes Varies}
As the number of boxes $n$ increases, the search is generally expected to last longer, so achieving equality in \eqref{eq:simpleGI} at the very start of the search by using $\mathbf{p}_0$ is less important. Further, the more boxes there are, the greater the uncertainty in the hider's location and hence the more valuable information about the hider's location becomes. Therefore, as discussed in \cite{Clarkson:2020}, the future benefit (ignored by $\mathbf{p}_0$) becomes more important as $n$ grows. Both of these factors contribute to the performance of $\mathbf{p}_0$ degrading with $n$ in the general \emph{varied} sample scheme.

However, recall that the smaller the variation in future benefit between the $n$ boxes, the better $\mathbf{p}_0$ will perform. In the narrow \emph{medium} sample scheme, this decrease in future benefit variation is the most significant factor as $n$ increases, leading to a slight improvement in the performance of $\mathbf{p}_0$ as $n$ increases in this case. Further, the size of the detection probabilities also has an effect. When they are low, adding a box to the game leads to a much bigger increase in the expected duration of the search than when they are high, explaining why, as $n$ increases, we see a greater improvement in the performance of $\mathbf{p}_0$ in \emph{high} compared to \emph{medium}, and also why the performance of $\mathbf{p}_0$ worsens for \emph{low}.

\subsection{Future Benefit for Two-Box Problems}
In this section, for $n=2$, we examine how future benefit affects the difference between $\mathbf{p}_0\equiv (p_0,1-p_0)$ and the optimal hiding strategy $\mathbf{p}^*\equiv (p^*,1-p^*)$. 

\cite{GitRob1} studied two-box search games with $\alpha_1 < \alpha_2$ and unit search times, noting that whenever $\mathbf{p}_0$ was suboptimal, $p^*$ was greater than $p_0$, but found no reason for this observation. We believe this phenomenon is explained by future benefit. Since $\alpha_1<\alpha_2$ and $t_1=t_2=1$, the future benefit in \eqref{eqn:fbenefit} at any $\mathbf{p}$ is greater for box 2 than box 1. Whilst $\mathbf{p}_0$ considers immediate benefit, it ignores future benefit, explaining why the hider, who wants the searcher to spend more time in boxes with inefficient search modes, may prefer to hide in box 1 with a probability greater than $p_0$.

To demonstrate this effect, we conduct an additional numerical study with $n=2$. The two boxes are drawn using \eqref{eqn:acyclicdraw}, then relabelled so box 1 has the lower future benefit in \eqref{eqn:fbenefit}. 
We generate 5,000 such search games. In the 3,049 where $p^*\neq p_0$, we found $p^* > p_0$ in 3,001 cases and $p^* < p_0$ in only 48. In other words, in the vast majority of cases where $p_0$ is suboptimal, the hider chooses box 1 (that with a smaller future benefit) with probability greater than $p_0$.


In addition, \cite{Ruckle:1991} solves a two-box game with $t_1=t_2=\alpha_2=1$ and a sole parameter $\alpha_1 \equiv \alpha \in (0,1)$. 
For this problem, \cite{Ruckle:1991} shows any Gittins search sequence against an optimal hiding strategy $p^*$ always searches box 1 until there is a tie between the two Gittins indices on the $h$th search for some $h \in \{1,2,\ldots\}$. In other words, $p_0$ is optimal for the hider if and only if $h=1$; otherwise, $p^*>p_0$. \cite{Ruckle:1991} shows that as $\alpha$ decreases, $h$ increases, so $p^*$ increases; see Table \ref{tab:Ruckle}.

We offer the following explanation. For any $\alpha \in (0,1)$, the future benefit in \eqref{eqn:fbenefit} is greater for box 2 than for box 1, so we always have $p^*>p_0$. Further, as $\alpha$ decreases, the future benefit of box 2 stays the same whilst the future benefit of box 1 decreases, which explains the growing difference between $p^*$ and $p_0$.

\begin{table}[h!]
\caption{For the two-box game with $t_1=t_2=\alpha_2=1$ and $\alpha_1 \equiv \alpha \in (0,1)$, the first search, $h$, before which the indices are equal by the value of $\alpha$.} \label{tab:Ruckle}
\begin{center}  
\begin{tabular}{|c| c|} 
 \hline
Value of $h$ & Range of $\alpha$ \\
\hline
1 & $[0.618,1]$ \\
2    & $[0.382,0.618]$ \\
3    & $[0.276,0.382]$ \\
\hline
\end{tabular}
\end{center}
\end{table}



\subsection{Cyclic Games} \label{sec:cyclic_games}
In real-life applications, the searcher will estimate detection probabilities $\alpha_1, \ldots, \alpha_n$ using a mixture of expert opinion or historical data. In this subsection, we show that if the searcher chooses their estimates such that
\begin{equation} \label{eqn:cyclic}
(1-\al_1)^{x_1}=\cdots = (1-\al_n)^{x_n}
\end{equation}
for some coprime, positive integers $x_1, \ldots, x_n$, then we may simplify the calculations of the conditional expected times to detection which are required both to test the optimality of $\mathbf{p}_0$ using Proposition \ref{prop:subgamesoln} or estimate optimal strategies using Algorithm \ref{al:minExpCost}. Under \eqref{eqn:cyclic}, after $x_i$ searches of box $i$ for $i=1,\ldots,n$, the posterior probability vector on the hider's location returns to the initial $\mathbf{p}$, so the search game has reset itself. Therefore, we call search games satisfying \eqref{eqn:cyclic} \emph{cyclic} search games.

Both step \ref{step:initialize} of Algorithm \ref{al:minExpCost} and the optimality test in Proposition \ref{prop:subgamesoln} require the evaluation of $\xi_\sigma$, namely a Gittins search sequence against $\mathbf{p}_0$ which breaks ties between Gittins indices using a fixed preference ordering $\sigma$. As discussed in Section \ref{sec:numercalcs}, since $\mathbf{p}_0$ ties all $n$ indices, the order of $\sigma$ determines the first $n$ searches of $\xi_\sigma$.
Due to \eqref{eqn:cyclic}, in a cyclic game, after the first $n$ searches we can reliably compare indices and hence reliably recognize ties by keeping track of the number of searches that $\xi_\sigma$ has performed in each box---as opposed to comparing floating-point indices directly (see Appendix \ref{append:numerdetails_cyclic} for details).
Also by \eqref{eqn:cyclic}, in its first $\sum_{i=1}^n x_i$ searches, any Gittins search sequence against $\mathbf{p}_0$ searches box $i$ exactly $x_i$ times, for $i=1,\ldots, n$. At that point, \eqref{eqn:cyclic} shows that all $n$ indices are once again tied, so the problem has reset itself.
Consequently, $\xi_\sigma$ will repeat the same cycle of $\sum_{i=1}^n x_i$ searches indefinitely, which leads to a closed form for $u(i, \xi_\sigma)$, $i=1,\ldots,n$; see Appendix \ref{append:numerdetails_cyclic} for details.

In fact, after an initial transient period, the aforementioned cycle of $\sum_{i=1}^n x_i$ searches will repeat indefinitely under \emph{any} Gittins search sequence $\xi$ against \emph{any} hiding strategy. Once again, this cyclic behaviour leads to a closed form for $u(i, \xi)$, with details provided in Appendix \ref{append:numerdetails_cyclic}.

\section{Conclusion}
\label{sec:conclude}
The history of the discrete search game studied in this paper dates back to the 1960s, when \cite{Bram1963} considers the special case of unit search times.
Despite its long history, it was not until recently that \cite{clarkson2022classical} shows the existence of an optimal search strategy in general.
This paper complements these earlier works by presenting an algorithm that computes an optimal strategy both for the hider and for the searcher to arbitrary accuracy. 
We further consider the practicalities of implementing the algorithm, and demonstrate its performance in a numerical study.

Our algorithm relies on generating a new search sequence in each iteration to strengthen the searcher's repertoire of pure search strategies.
The progression of the algorithm is analogous to that of a cutting plane method used in convex optimization (see, for example, Chapter 13 in \cite{luenberger2008linear}).
The same idea---constructing a finite strategy set for the infinite-strategy player using their own previous optimal responses---is applicable to other semi-finite two-person zero-sum games.

An important assumption that makes the discrete search game in this paper more tractable is that it takes no time for the searcher to travel between search locations---which is reasonable in some applications, such as search in the cyber world, or if the travel time is substantially smaller than the time spent searching, but not in all cases.
Further, if there are multiple hiders or searchers, then how would such a team coordinate to achieve their common goals?
Studying these extensions would require new formulation and new techniques.


\section*{Acknowledgments}
The authors are grateful for the support of the EPSRC funded EP/L015692/1 STOR-i Centre for Doctoral Training, and would like to thank Kevin Glazebrook and Dashi Singham for helpful discussions and comments.

\begin{appendices}

\section{An Upper Bound for the Expected Time to Detection} \label{append:numerdetails_acyclic}
For any Gittins search sequence $\xi$, a finite approximation to \eqref{eqn:genv_i(xi)} after any $R\in\{0,1,\ldots\}$ searches of box $i$ gives a lower bound for $u(i,\xi)$. To support the discussion in Section \ref{sec:numercalcs}, this section presents a method to compute an upper bound for $u(i,\xi)$.

Write
\begin{equation*}
l \equiv \floor[\Bigg]{\max_{i,j \in \{1,\ldots , n\}} \frac{\log(1-\alpha_i)}{\log(1-\alpha_j)}} + 1.
\end{equation*}
For any $i,j \in \{1,\ldots, n\}$, since $l > \log(1-\alpha_i)/\log(1-\alpha_j)$, then $(1-\alpha_j)^l < (1-\alpha_i)$, so between any two successive searches of box $i$, any Gittins search sequence will make at most $l$ searches of box $j$, for $i \neq j$.
Therefore, for any $i \in \{1,\ldots,n\}$, no more than time $\widehat{l} \equiv \sum_{j=1}^n lt_j$ can elapse between successive searches of box $i$ following any Gittins search sequence. 


It follows that, for any Gittins search sequence $\xi$ and $R \in \{0,1,\ldots\}$, following $\xi$ until $R$ searches of box $i$ have been made, then, after that, assuming box $i$ is searched at regular time intervals of length $\widehat{l}$ gives an upper bound on $u(i,\xi)$. In other words,
\begin{align} \label{eqn:upperboundV}
u(i,\xi) &\leq \tau_i(1,\xi)+\sum_{r=1}^{R} (1-\alpha_i)^{r} [\tau_i(r+1,\xi)-\tau_i(r,\xi)] + (1-\alpha_i)^{R} \sum_{r=1}^{\infty} \widehat{l}(1-\alpha_i)^{r}  \nonumber \\
&= \tau_i(1,\xi)+\sum_{r=1}^{R} (1-\alpha_i)^{r} [\tau_i(r+1,\xi)-\tau_i(r,\xi)] + \frac{\widehat{l}(1-\alpha_i)^{R+1}}{\alpha_i}.
\end{align} 
Note that the first two terms in \eqref{eqn:upperboundV} are the lower bound for $u(i,\xi)$ obtained via a finite approximation to \eqref{eqn:genv_i(xi)} after $R$ searches of box $i$. Therefore, we increase $R$ until the ratio of the third term of \eqref{eqn:upperboundV} divided by the first two terms of \eqref{eqn:upperboundV} is less than $10^{-10}$.

\section{The Expected Time to Detection in a Cyclic Game} \label{append:numerdetails_cyclic}
To support the discussion in Section \ref{sec:cyclic_games}, this section presents a method to compute the expected time to detection $u(i,\xi)$ in a cyclic game, when the hider hides in any box $i \in \{1,\ldots , n\}$ and the searcher plays any Gittins search sequence $\xi$ against any hiding strategy $\mathbf{p}$.


Any cyclic search game satisfies \eqref{eqn:cyclic}. Hence, as noted in \cite{Matula}, after an initial transient period, any Gittins search sequence $\xi$ will make $\widehat{x} \equiv \sum_{i=1}^n x_i$ consecutive searches involving exactly $x_i$ visits of box $i$ for $i=1,\ldots , n$. 
By \eqref{eqn:cyclic}, the posterior probabilities that the hider is in each box will be the same before and after these $\widehat{x}$ searches have been made. Therefore, $\xi$ will cycle these $\widehat{x}$ searches indefinitely; 
such repetition allows a closed form for $u(i,\xi)$, $i=1,\ldots , n$, to be calculated as follows.

Suppose $\xi$ has entered the cycle of $\widehat{x}$ searches after some $R$ searches of box $i$ have been made. 
Recall $\tau_i(r,\xi)$ is the time at which the $r$th search of box $i$ is made under $\xi$ for $r=1,2,\ldots$. Then, for any $a \in \{0,1,2,\ldots\}$ and $r \in \{1,2,\ldots\}$ , we have 
\begin{equation} \label{eqn:b_simplification}
\tau_i(R+r+ax_i, \xi) = \tau_i(R+r, \xi) + a\widehat{t},
\end{equation}
where $\widehat{t} \equiv \sum_{i=1}^n x_it_i$ is the time it takes to complete one cycle of $\widehat{x}$ searches. 

Write $u_R \equiv \sum_{r=1}^{R} (1-\alpha_i)^{r} [\tau_i(r+1,\xi)-\tau_i(r,\xi)]$. 
We may write \eqref{eqn:genv_i(xi)} as
\begin{equation*}
u(i,\xi) = \tau_i(1,\xi) + u_R + (1-\alpha_i)^R \sum_{a=0}^\infty \left( \sum_{r=1}^{x_i} (1-\alpha_i)^{ax_i+r} [\tau_i(R+r+ax_i+1,\xi)-\tau_i(R+r+ax_i,\xi)] \right).
\end{equation*}
It follows from \eqref{eqn:b_simplification} that we have
\begin{align*}
\frac{u(i,\xi)-\tau_i(1,\xi)-u_R}{(1-\alpha_i)^R} 
&= \sum_{a=0}^\infty (1-\alpha_i)^{ax_i} \left( \sum_{r=1}^{x_i} (1-\alpha_i)^r \left[\tau_i(R+r+1, \xi) - \tau_i(R+r, \xi)\right] \right)  \\
&=\frac{A_i(R,\xi)}{(1-(1-\alpha_i)^{x_i})},
\end{align*}
where 
\begin{equation*} 
A_i(R,\xi) \equiv \sum_{r=1}^{x_i} (1-\alpha_i)^{r} \left[\tau_i(R+r+1, \xi) - \tau_i(R+r, \xi)\right].
\end{equation*}
Therefore, to evaluate $u(i,\xi)$ precisely, we only need to calculate $\tau_i(r,\xi)$ for those $r \in \{1,\ldots, R+x_i+1\}$, which we discuss in the following.

As justified in Section \ref{sec:numercalcs}, for $\mathbf{p}_k$ a solution to the linear program in Step \ref{step:solveandcheck} of Algorithm \ref{al:minExpCost}, using floating-point numbers to compute the Gittins indices in \eqref{eq:simpleGI} will calculate $\tau_i(r,\xi)$ for $r \in \{1,2,\ldots\}$ for some $\xi \in \mathcal{C}^{\rm B}_{\mathbf{p}_k}$, which is sufficient for step \ref{step:update_bounds} of Algorithm \ref{al:minExpCost}. To determine $R$, we 
evaluate $\xi$ 
until $\widehat{x}$ consecutive searches involve $x_j$ searches of box $j$ for $j=1,\ldots, n$.

On the other hand, floating-point indices cannot reliably calculate $\tau_i(r,\xi)$ for \emph{specific} search sequences $\xi\in \widehat{\mathcal{C}}^{\text{B}}_{\mathbf{p}_0}$, a requirement of step \ref{step:initialize} of Algorithm \ref{al:minExpCost}. To do this, 
a set of alternative indices is derived below, which encodes integers rather than floating-point numbers.


First, note that, by \eqref{eqn:cyclic}, the first $\widehat{x}$ searches of any $\xi \in \widehat{\mathcal{C}}^{\text{B}}_{\mathbf{p}_0}$ will involve $x_j$ searches of box $j$, $j=1,\ldots , n$. Therefore, we may take $R=0$, so only need calculate $\tau_i(r,\xi)$ for $r \in \{1,\ldots, x_i+1\}$.
For any $\xi \in \widehat{\mathcal{C}}^{\text{B}}_{\mathbf{p}_0}$, all $n$ indices in \eqref{eq:simpleGI} are equal at the start of the search, say to $y$. 
For $i=1,\ldots , n$, suppose $r_i \in \{1,\ldots, x_i\}$ searches of box $i$ have been made, so the current corresponding index in \eqref{eq:simpleGI} is $y(1-\alpha_i)^{r_i}$. Then we have $$y(1-\alpha_i)^{r_i} \propto (1-\alpha_i)^{r_i} = \left((1-\alpha_i)^{x_i}\right)^{r_i/x_i}= c^{r_i/x_i},$$
where $c\equiv(1-\alpha_i)^{x_i} \in (0,1)$ is constant over all boxes by \eqref{eqn:cyclic}. 
Therefore, the rule in \eqref{eq:simpleGI} is equivalent to searching any box $j$ satisfying
\begin{equation} \label{eq:cyclicGI}
j = \argmax_{i \in \{1, \ldots ,  n\}} \frac{x_i}{r_i}.
\end{equation}
Yet, both $x_i$ and $r_i$ are integers, so, unlike using \eqref{eq:simpleGI}, ties will always be detected using \eqref{eq:cyclicGI}.

\end{appendices}

\bibliographystyle{apalike}
\bibliography{References}

\end{document}